
\documentclass[reqno, letterpaper, oneside]{article}
\usepackage{amsmath,amsfonts,amssymb,amsthm}
\usepackage[final]{graphicx}

\usepackage{xpunctuate} 
\usepackage[square,numbers]{natbib}

\usepackage{enumitem}

\usepackage{bbm}
\usepackage{geometry}

\geometry{
  hmargin={25mm, 25mm}, 
  vmargin={25mm, 25mm},
  headsep=10mm,
  headheight=5mm,
  footskip=10mm
}

\numberwithin{equation}{section}

\newtheorem{theorem}{Theorem}[section]
\newtheorem{lemma}[theorem]{Lemma}
\newtheorem{corollary}[theorem]{Corollary}
\newtheorem{claim}[theorem]{Claim}

\newtheorem{remarkttt}[theorem]{Remark}
\newtheorem{problem}[theorem]{Problem}

\theoremstyle{definition}
\newtheorem{remarkqqq}[theorem]{Remark}
\newenvironment{remark}{\begin{remarkqqq}}
  {\hfill\qedsymbol\end{remarkqqq}}
\newtheorem{exampleqqq}[theorem]{Example}

\newcommand{\refT}[1]{Theorem~\ref{#1}}

\newcommand{\refL}[1]{Lemma~\ref{#1}}
\newcommand{\refR}[1]{Remark~\ref{#1}}

\newcommand{\refCr}[1]{Corollary~\ref{#1}}
\newcommand{\refS}[1]{Section~\ref{#1}}

\newcommand{\refApp}[1]{Appendix~\ref{#1}}

\newcommand\E{{\mathbb E}}
\renewcommand\Pr{{\mathbb P}}
\newcommand\dto{\overset{\mathrm{d}}{\to}}

\newcommand{\cG}{{\mathcal G}}

\newcommand{\cH}{{\mathcal H}}

\newcommand{\cS}{{\mathcal S}}

\newcommand\Var{\operatorname{Var}}

\newcommand\G{G} 
\newcommand\dn{\mathbf{d}}
\newcommand\wn{\mathbf{w}}

\newcommand{\tend}{\longrightarrow}
\newcommand\pto{\overset{\mathrm{p}}{\tend}}

\newcommand\eqd{\overset{\mathrm{d}}{=}}

\newcommand\NN{\mathbb{N}}

\newcommand\set[1]{\ensuremath{\{#1\}}}

\newcommand\bigpar[1]{\bigl(#1\bigr)}
\newcommand\Bigpar[1]{\Bigl(#1\Bigr)}
\newcommand\biggpar[1]{\biggl(#1\biggr)}
\newcommand\lrpar[1]{\left(#1\right)}
\newcommand\bigsqpar[1]{\bigl[#1\bigr]}
\newcommand\Bigsqpar[1]{\Bigl[#1\Bigr]}
\newcommand\biggsqpar[1]{\biggl[#1\biggr]}

\newcommand\op{o_{\mathrm p}}
\newcommand\Op{O_{\mathrm p}}

\newcommand\floor[1]{\lfloor #1 \rfloor}

\renewcommand\le{\leqslant}
\renewcommand\ge{\geqslant}

\newcommand{\indic}[1]{\mathbbm{1}_{{\{{#1}\}}}}

\renewcommand{\emptyset}{\varnothing} 

\DeclareMathOperator*{\NB}{\mathrm{NBin}}
\DeclareMathOperator*\Po{\mathrm{Po}}

\newcommand\mc{m_{\mathrm{c}}}
\newcommand\tc{t_{\mathrm{c}}}

\newcommand\marginal[1]{}

\renewcommand{\epsilon}{\varepsilon}
\newcommand{\eps}{\varepsilon}

\long\def\symbolfootnote[#1]#2{\begingroup
\def\thefootnote{\fnsymbol{footnote}}\footnote[#1]{#2}\endgroup}

\newenvironment{romenumerate}[1][-5pt]{
\addtolength{\leftmargini}{#1}\begin{enumerate}
 }{\end{enumerate}}

\newcounter{thmenumerate}

\newcommand{\dx}{\mathrm d}




\newcommand\bp{{\mathfrak{X}}}
\newcounter{case}
\newcommand\pfcase[1]{\refstepcounter{case}\smallskip\noindent\emph{Case
	\arabic{case}: #1} \noindent}

\newcommand\REM[1]{{\raggedright\texttt{[#1]}\par\marginal{XXX}}}

\newcommand\Bin{\mathrm{Bin}}
\newcommand\punkt{\xperiod}    
    
\newcommand\ie{i.e\punkt}
\newcommand\eg{e.g\punkt}

\newcommand{\as}{a.s\punkt}

\newcommand\whp{whp} 

\newcommand\ga{\alpha}

\newcommand\gd{\delta}

\newcommand\go{\omega}

\newcommand\ER{Erd\H{o}s--R{\'e}nyi}
\newcommand\ganx[1]{G^\ga_{n,#1}}

\newcommand\ganm{\ganx{m}}
\newcommand\gganx[1]{G^{\ga,*}_{n,#1}}

\newcommand\gganm{\gganx{m}}
\renewcommand\P{\Pr}
\newcommand\ntoo{\ensuremath{{n\to\infty}}}
 \newcommand\Gxdn{G^*_\dn}
\newcommand\qw{^{-1}}

\newcommand\qq{^{1/2}}
\newcommand\qqw{^{-1/2}}
\newcommand\qqq{^{1/3}}

\newcommand\qqqw{^{-1/3}}

\newcommand\mux{\mu}
\newcommand\bbR{\mathbb R}
 \newcommand\poo{p_\infty}
\newcommand\epsoo{\eps_\infty}
\newcommand\pn{p_n}
\newcommand\dnx{(d_v)_{v\in[n]}}
\newcommand\muu{\zeta}
\newcommand\fall[2]{\langle#1\rangle_{#2}}

\newcommand\bigabs[1]{\bigl\lvert#1\bigr\rvert}

\newcommand\mstop{m_\dagger}



\let\OLDthebibliography\thebibliography
\renewcommand\thebibliography[1]{
  \OLDthebibliography{#1}
  \setlength{\parskip}{0pt}
  \setlength{\itemsep}{0pt plus 0.3ex}
}

\usepackage[usenames,dvipsnames]{color}


\begin{document}

\title{Preferential attachment without vertex growth: \\ emergence of the giant component} 
\author{Svante Janson%
\thanks{Department of Mathematics, Uppsala University, 
PO Box 480, SE-751~06 Uppsala, Sweden. 
E-mail: {\tt svante.janson@math.uu.se}. 
Research partially supported by a grant from 
the Knut and Alice Wallenberg Foundation
and a grant from
the Simons foundation.} \ 
 and Lutz Warnke\thanks{School of Mathematics, Georgia Institute of Technology, Atlanta GA~30332, USA.
E-mail: {\tt warnke@math.gatech.edu}. Research partially supported by NSF Grant DMS-1703516 and a Sloan Research Fellowship.}}

\date{April 25, 2019}

\maketitle

\begin{abstract}
We study the following 
preferential attachment variant of the classical
Erd{\H{o}}s--R{\'e}nyi random graph process. 
Starting with an empty graph on~$n$~vertices, new edges are added one-by-one, 
and each time
an edge is chosen with probability roughly proportional to the product of
the current degrees of its endpoints (note that the vertex set is fixed).
We determine the asymptotic size of the giant component in the 
supercritical phase, confirming a conjecture of Pittel from~2010.
Our proof uses a simple method: 
we condition on the vertex degrees (of a multigraph variant), and 
use known results for the configuration~model.
\end{abstract}

\section{Introduction}
During the last two decades 
`dynamic' network models (which evolve/grow step-by-step)
have been of great interest in various different research areas, 
including combinatorics, 
probability theory, 
statistical physics, 
and network science, 
see \eg{}
\cite{BBRG,JLR,FKRG,
  Durrett2010,Hofstad2017,
  DM2003,KRBN2010,
  Barabasi2016}.
Part of the motivation stems from the fact that many real-world networks
(such as Facebook) also grow over time.
Widely studied models  include variants of the classical
Erd{\H{o}}s--R{\'e}nyi random graph
process~\cite{ER1960,BBRG,RWEP,RWAP,BR2013}  
and the modern
`scale-free' preferential attachment model made popular by
Barab\'asi and Albert \cite{BA1999,BRST2001,Barabasi2016},
which have strikingly different~features.

In this paper we consider a hybrid between between the 
Erd{\H{o}}s--R{\'e}nyi and Barab\'asi--Albert network models,
where the vertex set is fixed (as in the Erd\H{o}s--R{\'e}nyi case)
and edges are added with (one version~of) preferential attachment.
More precisely, let~$\alpha \in (0,\infty)$ be a parameter,
and write~$(G^{\alpha}_{n,m})_{m \ge 0}$ 
for the  random graph process
with fixed vertex set~$[n]=\set{1,\dots,n}$ 
where new edges are added one-by-one (starting with no edges) such that
the next edge connects two currently non-adjacent vertices~$v$ and~$w$ with probability proportional\footnote{Here we have tacitly normalized in a convenient way: namely, if the probability of adding the new edge~$\{v,w\}$ is proportional to~$(\chi d_v+\beta)(\chi d_w+\beta)$ with~$\beta,\chi>0$, then it is also proportional to~$(d_v+\alpha)(d_w+\alpha)$ with~$\alpha := \beta/\chi>0$.} to~$(d_v+\alpha) (d_w+\alpha)$, 
where~$d_v$ denotes the current degree of~$v$.
In intuitive words, edges are thus added according to a `rich-get-richer' 
preferential attachment mechanism (since vertices with higher degree are
more likely to be joined). 
Note that~$G^\ga_{n,m}$ has~$m$~edges.  
Furthermore, in the limit~$\ga\to\infty$ all edges are added with the same probability, 
so we recover the \ER{} random graph process.

The dynamic network model~$(G^{\alpha}_{n,m})_{m \ge 0}$ is so natural that is has been suggested
and studied multiple times (sometimes independently)
in the complex networks and combinatorial probability literature.
It~was first studied in~2010 by Pittel~\cite{Pittel}, who described it is a special
case of a more general model that he attributed to a suggestion 
by Lov{\'a}sz in~2002,  
where the next edge joins~$v$ and~$w$ with probability proportional to~$f(d_v)f(d_w)$ for some function~$f$; 
this in turn can be traced back to a suggestion\footnote{Erd{\H{o}}s and R{\'e}nyi proposed, even
  more generally, to study network models where the probability of joining~$v$ and~$w$ depends on the current degrees~$d_v$ and~$d_w$, see~\cite[p.~344]{ER1961}.}
of Erd{\H{o}}s and R{\'e}nyi~\cite{ER1961} from~1961 
motivated by more realistic modeling
(see also~\refS{sec:final:f}). 
From 2011 onwards~\citet{BCLSV2011}, \citet{RS2012} and~\cite[Example~7.9]{Jan2018} 
also studied a natural multigraph variant of~$\ganm$ (see \refS{sec:approx}), 
motivated by the emerging (multi)graph limit theory~paradigm. 
Furthermore, in~2012,
Ben-Naim and Krapivsky~\cite{BNK} proposed and studied
the model~$\ganm$ by statistical physics methods, motivated 
by the way connections are formed on Facebook (see also~\cite{Samalam2012} and~\refApp{sec:BNK:pw}).

In this paper we study the emergence of the giant component in this intriguing model, 
which is one of the most important and fascinating phase transitions in
random graph theory.
Pittel~\cite{Pittel} answered the basic question of existence and location of 
this phase transition in~$\ganm$: for any fixed~$\ga>0$   
he showed that at around~$m\approx \mc$ many steps 
the largest component typically changes from size~$\Theta(\log n)$ to size~$\Theta(n)$, 
where\footnote{The phase transition location~$\mc$ from~\eqref{def:mc} can easily be guessed via modern heuristics, see~\refApp{sec:mc:heuristic}.} 
\begin{equation}\label{def:mc}
  \mc := \frac{n}{2(1+\alpha^{-1})}
  = \frac{n\ga}{2(\alpha+1)}.
\end{equation}
A variant of this result for~$\alpha=1$ was also reported by Ben-Naim and Krapivsky~\cite{BNK}.
In fact, Pittel~\cite{Pittel} proved much stronger estimates on the 
size~$L_1(m)=L_1(G^{\alpha}_{n,m})$ of the largest component of~$G^{\alpha}_{n,m}$, 
in particular near the critical point~$\mc$.   
Focusing for simplicity on the `supercritical' phase 
(where the unique `giant' component has emerged as the largest component), 
his result~\cite[Theorem~1]{Pittel} can be written as follows.  
\unskip\footnote{See \refApp{sec:Pittel:pw} for Pittel's formulation of his supercritical giant component result.} 
If~$\eps=O(1)$ and~$\eps^4 n \to \infty$ as~$n \to \infty$, 
then, for a certain function
$\rho_\alpha(\eps)$ 
with $\rho_\alpha(\eps)=\Theta(\eps)$ as~$\eps \searrow 0$, we have\footnote{As usual, $o_p(1)$ denotes a quantity that converges to~$0$ in~probability as $n \to \infty$; see \eg{}~\cite{JLR,SJN6}.} 
\begin{align}\label{eq:Pittel:super:L1}
L_1\bigpar{\mc(1+\eps)} &= \rho_\alpha(\eps) n \cdot (1 +o_p(1)) .
\end{align}
Overall, Pittel's `finite-size scaling' results qualitatively recover several key features of the Erd{\H{o}}s--R{\'e}nyi phase transition~\cite{BR2009,BR2013}, 
in particular the fundamental `linear growth' of the $\Theta(\eps n)$--sized largest component, see~\eqref{eq:Pittel:super:L1}.  
However, for technical reasons his proof requires the extra assumption~$\eps^4 n \to \infty$, 
while it is natural to guess, and was conjectured by~\citet[pp.~621,649]{Pittel},
that the estimate~\eqref{eq:Pittel:super:L1} remains valid under the weaker 
supercritical condition~$\eps^3n \to \infty$ 
known from the Erd{\H{o}}s--R{\'e}nyi reference model.

The main purpose of the present paper is to extend Pittel's result and
verify his nice conjecture
(which also appears in 
the recent book by Frieze and Karo{\'n}ski~\cite[Section~17.5]{FKRG}).
We further extend the result by allowing~$\ga$ to depend on~$n$. 
Moreover, and at least as important, we do this using a simpler method
than the one in~\cite{Pittel}: we use known results for the
configuration model to derive the results rather quickly.

\subsection{Main results}\label{sec:main}
Our first result determines the asymptotic size of the giant component in
the entire supercritical phase,
thus confirming Pittel's nearly 10-year-old conjecture (see also \refApp{sec:Pittel:pw}).  
Furthermore, \eqref{eq:rho:linear}~below identifies the precise linear
growth-rate of the giant, generalizing and rigorizing a
statistical physics result by Ben-Naim and Krapivsky~\cite{BNK} 
from~\citeyear{BNK} for the special case~$\alpha=1$ (see~\refApp{sec:BNK:pw}). 
\begin{theorem}[Extending~\citet{Pittel}]\label{thm:main1}%
Fix~$\alpha \in (0,\infty)$. 
If\/~$\eps=\eps(n)=O(1)$ and~$\eps^3n \to \infty$ as $n \to \infty$, then
\begin{align}
\label{eq:thm:main1:giant:L1}
L_1\bigpar{\mc(1+\eps)} &=  \rho_\alpha(\eps) n \cdot (1+\op(1)) ,
\end{align}
where the continuous function~$\rho_\alpha : (0,\infty) \to (0,1]$
is given by \eqref{emma} 
together with \eqref{sofie}; 
it satisfies~$0<\rho_\alpha(\eps) < 2\eps$ and 
\begin{equation}\label{eq:rho:linear}
  \rho_\alpha(\eps)  = \frac{2\eps}{1+2/\alpha} + O(\eps^2),
  \qquad\text{as }\eps \searrow 0.
\end{equation}
\end{theorem}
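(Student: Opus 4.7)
Following the strategy announced in the abstract, I would route the whole argument through a multigraph variant $\gganm$ of $\ganm$ in which each new edge is sampled from the weights $(d_v+\alpha)(d_w+\alpha)$ without any simple-graph constraint, and then analyse $\gganm$ via a conditional configuration-model reduction. In the range $m=\mc(1+\eps)=\Theta(n)$ the total weight is $2m+n\alpha=\Theta(n)$, and a direct second-moment calculation shows that the expected number of loops and repeated edges in $\gganm$ is $O(1)$. This lets me couple $\gganm$ and $\ganm$ so they have the same largest component up to an $o(n)$ error whp---harmless for the leading-order assertion \eqref{eq:thm:main1:giant:L1}.

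The heart of the argument is the conditional identity
\[
   \mathcal L\bigpar{\gganm \mid \dn} \;=\; \Gxdn ,
\]
so that, given the degree sequence, $\gganm$ is distributed as the configuration multigraph $\Gxdn$ on $\dn$. This drops out by writing the probability of any concrete sequence of edge insertions as a product of $(d_v+\alpha)(d_w+\alpha)$-weights and observing that, once the terminal degrees $\dn$ are fixed, each pairing of half-edges is produced by exactly the same number of ordered insertion sequences, weighted identically. The problem then splits into (i)~identifying the typical degree sequence of $\gganm$, and (ii)~invoking an off-the-shelf giant-component theorem for $\Gxdn$.

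For (i) I would use a P\'olya-urn embedding: assign each vertex $v$ a phantom weight $\alpha$ at time $0$, so its weight at time $m$ is $d_v(m)+\alpha$. Classical urn/Dirichlet theory for pair-preferential attachment then gives that the weight shares $(d_v+\alpha)/(2m+n\alpha)$ are asymptotically Dirichlet, and the empirical degree distribution converges in probability (in $\ell^1$ and in second moment) to an explicit mixed-Poisson / negative-binomial law $\pi=\pi_{\alpha,m/n}$ with mean $2m/n$. For (ii) I would invoke a Janson--Luczak-type theorem: when the empirical law of $\dn$ converges to $\pi$ with $\sum_k k^2\pi_k<\infty$ and appropriate uniform integrability, the largest component of $\Gxdn$ has size $\rho\,n(1+\op(1))$ with $\rho=1-g_\pi(\xi)$, where $\xi\in(0,1)$ is the unique fixed point of $\xi=g_{\widehat\pi}(\xi)$ and $\widehat\pi$ is the size-biased version of $\pi$. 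Matching the criticality condition $\E_\pi[D(D-2)]=0$ to $m=\mc$ recovers Pittel's threshold \eqref{def:mc}; Taylor-expanding the fixed-point equation at $\xi=1$ then produces the function $\rho_\alpha(\eps)$ of \eqref{emma}--\eqref{sofie}, and in particular the linear asymptotic \eqref{eq:rho:linear}.

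The main obstacle is the near-critical \emph{quantitative} version of step~(i). Because $\rho_\alpha(\eps)=\Theta(\eps)$, a soft $o(1)$ concentration of the empirical degree distribution is insufficient: I need the `activity' $\sum_v d_v(d_v-2)/n$ to equal its $\pi$-limit up to error $o(\eps)$ whp. Standard urn second-moment bounds give $\Var\bigsqpar{\sum_v d_v(d_v-2)}=O(n)$, hence an $O(\sqrt n)$ fluctuation, which is $o(n\eps)$ whenever $\eps^2 n\to\infty$---a condition implied by Pittel's supercritical window $\eps^3 n\to\infty$ (since $\eps=O(1)$). Combined with an effective version of the configuration-model giant-component theorem (either tracking error terms through the fixed-point expansion, or running the exploration process with martingale concentration), this sharp degree-sequence control would deliver \eqref{eq:thm:main1:giant:L1} throughout the full supercritical phase, confirming Pittel's conjecture.
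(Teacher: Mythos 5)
Your overall architecture (multigraph variant, conditional equivalence with the configuration model $\Gxdn$, negative-binomial degree asymptotics via an urn/branching-process embedding, then an off-the-shelf phase-transition theorem) is exactly the route the paper takes. However, two of your steps have genuine gaps.

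First, the reduction from $\gganm$ to $\ganm$. Counting loops and multiple edges is the right argument for conditioning a \emph{static} configuration model on simplicity, but it does not compare the two \emph{processes}: even on a trajectory that happens to stay simple, the one-step transition probabilities differ, because the simple process normalizes by $\sum_{\{x,y\}\notin E(G_i)}2(d_x+\ga)(d_y+\ga)$ while the multigraph process normalizes by $(2i+\ga n)(2i+\ga n+1)$. The laws are not related by a coupling that you get for free from ``few loops''; what one actually shows is that the ratio of the two path probabilities is $\Theta(1)$ uniformly over the first $m=O(n)$ steps, and this requires controlling the correction term $Q(G_i)$, essentially $\sum_v d_v(i)^3$, which in turn has to be bootstrapped from the multigraph process itself (since a priori you know nothing about the degrees of $\ganm$). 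The resulting statement is a one-sided bound $\Pr(\ganm\in\cG)\le B\Pr(\gganm\in\cG)+o(1)$, which suffices to transfer \whp{} statements but is weaker than the coupling you assert.

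Second, and more seriously, the barely supercritical regime $\eps\to0$ is where the entire difficulty of Pittel's conjecture lives, and your proposal defers it to ``an effective version of the configuration-model giant-component theorem.'' When $\eps\to0$ the limiting degree variable $D$ satisfies $\E D(D-2)=0$ exactly, so the standard supercritical statement ($L_1/n\pto 1-\E\xi^D$) is vacuous; what is needed is the barely-supercritical case of Janson--Luczak (part \ref{thm:critical} of \refT{thm:giant:conf}), which gives $L_1=\bigpar{2\E D/\E D(D-1)(D-2)+\op(1)}\muu_n$ with $\muu_n=\sum_v d_v(d_v-2)$, \emph{under the hypothesis} $n^{-2/3}\muu_n\to\infty$. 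Since $\muu_n=\Theta(\eps n)$, this hypothesis is precisely $\eps^3n\to\infty$ --- that is where the conjectured condition enters, not through the degree concentration (for which, as you note, $\eps^2n\to\infty$ would suffice). Your claim that one can ``track error terms through the fixed-point expansion'' or ``run the exploration process with martingale concentration'' is exactly the nontrivial content that made the problem open (and that forced Pittel's $\eps^4n\to\infty$); without citing or proving such a barely-supercritical result, the proof does not close. You would also still need to stitch the two regimes together by showing $\rho_\ga(\eps)\sim 2\eps/(1+2/\ga)$ as $\eps\searrow0$, so that the $\eps\to0$ answer $2\eps n/(1+2/\ga)$ and the fixed-$\eps$ answer $\rho_\ga(\eps)n$ are expressed by one formula.
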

\noindent
From the perspective of mathematical physics, this result places 
the preferential attachment process into the same universality class as the Erd{\H{o}}s--R{\'e}nyi reference model 
(with largest `supercritical' component of order~$\eps n$ and largest `subcritical' component of order $\eps^{-2}\log(\eps^3n)$, 
both under the condition~$\eps^3n \to \infty$, cf.~\cite{Pittel}). 
This kind of universality is only known for relatively few network models, 
including random regular graphs~\cite{NP2010}, the configuration model~\cite{OR2012}, 
hypercube percolation~\cite{vdHN2012,HN2016}, and bounded-size Achlioptas processes~\cite{RWapbsr}.

Since the limiting case~$\alpha \to \infty$ of the preferential attachment process
recovers the uniform Erd{\H{o}}s--R{\'e}nyi process~$(G^{\mathrm{ER}}_{n,m})_{m \ge 0}$,
it is natural to wonder under what conditions this 
Erd{\H{o}}s--R{\'e}nyi approximation holds rigorously in the case 
when~$\ga=\ga(n)$ is finite but tends to infinity as~$n \to \infty$.
Our main giant component result, which is the following extension of \refT{thm:main1}, 
allows us to answer this intriguing question
by allowing for~$\alpha=\alpha(n)$, including~$\ga(n)\to\infty$.
To establish uniqueness of the `giant component' (as in~\cite{Pittel}), 
\refT{thm:main} also includes a weak estimate on the 
size~$L_2(m)=L_2(G^{\alpha}_{n,m})$ of the second largest component of~$G^{\alpha}_{n,m}$.  
We henceforth use the convention~that $x/\infty =0$ for any finite~$x$.
\begin{theorem}[Main giant component result]\label{thm:main}
Assume that~$\alpha=\alpha(n) \to a \in (0,\infty]$ as $n \to \infty$. 
If~$\eps=\eps(n)=O(1)$ and~$\eps^3 n \to \infty$ as~$n \to \infty$, then 
\begin{align} %
\label{eq:thm:main:giant:L1}
L_1\bigpar{\mc(1+\eps)} &= \rho_a(\eps) n \cdot (1 +o_p(1)), \\ 
\label{eq:thm:main:giant:L2}
L_2\bigpar{\mc(1+\eps)}  & = o_p(1) \cdot L_1\bigpar{\mc(1+\eps)}  ,
\end{align}
where the function~$\rho_a:(0,\infty) \to (0,1]$ is as in \refT{thm:main1},
and~$\rho_\infty(\eps)$ satisfies 
\begin{align}
  \label{rhooo}
1-\rho_\infty(\eps)=e^{-(1+\eps)\rho_\infty(\eps)}. 
\end{align}
In particular, \eqref{eq:rho:linear} holds for any $\ga\in(0,\infty]$.
\end{theorem}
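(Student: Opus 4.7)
The plan is the one announced in the abstract: introduce a multigraph sibling $\Gani[m]$ of $\ganm$, show that conditional on its degree sequence it is distributed as the configuration model, and then invoke off-the-shelf supercritical results for that model.

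First, define $\Gani[m]$ by the same evolution as $\ganm$ but without forbidding loops and multi-edges. Within a single step the two endpoint half-edges are then exchangeable, and multiplying the step probabilities along any legal ordered edge-sequence telescopes into
\begin{equation*}
\Pr[\Gani[m]=G] \;\propto\; \prod_{v \in [n]} \frac{\Gamma(d_v(G)+\alpha)}{\Gamma(\alpha)},
\end{equation*}
which depends on $G$ only through the degree sequence $\dn=(d_1,\dots,d_n)$. Hence, conditionally on $\dn$, the multigraph $\Gani[m]$ is uniform among multigraphs with that degree sequence, i.e., distributed exactly as the configuration model $\Gxdn$.

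Next, I would analyze $\dn$. The process $(d_v+\alpha)_{v\in[n]}$ is a classical $n$-colour P\'olya urn, so standard urn theory (or direct martingale/moment computations) yields sharp concentration of the empirical degree distribution and of the sums $\sum_v d_v$ and $\sum_v d_v(d_v-1)$ that govern the configuration-model thresholds. The marginal law of a typical degree is approximately $\NB(\alpha,p)$ for a suitable $p$, with mean $2m/n=\alpha(1+\eps)/(\alpha+1)$, and degenerates to $\Po(1+\eps)$ as $\alpha\to\infty$, interpolating between preferential attachment and the \ER{} profile. Here the concentration estimates must be sharp enough to tolerate $\eps^3n\to\infty$: fluctuations of $\sum_v d_v(d_v-1)$ have to be controlled up to order $o(\eps n)$, a stricter demand than what Pittel used under $\eps^4n\to\infty$.

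Feeding the concentrated degree sequence into a quantitative supercritical giant-component theorem for the configuration model (such as Molloy--Reed, Janson--\L uczak or Bollob\'as--Riordan) yields $L_1\bigpar{\mc(1+\eps)}=\rho_a(\eps)n(1+\op(1))$ together with $L_2=\op(L_1)$, where $\rho_a(\eps)$ is the survival probability of the size-biased branching process attached to the limit degree distribution; evaluating the relevant generating functions gives the explicit formulas \eqref{eq:rho:linear} and \eqref{rhooo}. The main obstacle I anticipate is the transfer from the multigraph $\Gani[m]$ back to the simple graph $\ganm$, kept uniform in $\alpha=\alpha(n)$ including $\alpha\to\infty$. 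At $m=\Theta(n)$ the expected number of loops and repeated edges in $\Gani[m]$ is only $\Theta(1)$, so a brief coupling/sprinkling argument should show that the two largest component sizes agree up to an additive $\op(\eps n)$; carrying this out cleanly when $\eps\to 0$, and simultaneously controlling the degenerating urn as $\alpha\to\infty$ (likely by a truncation on large degrees), is where the real technical work concentrates.
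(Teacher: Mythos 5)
Your overall strategy is the paper's: pass to the multigraph $\gganm$, observe that conditionally on the degree sequence it is the configuration model $\Gxdn$ (your $\prod_v\Gamma(d_v+\alpha)/\Gamma(\alpha)$ computation is exactly the exchangeability identity the paper uses), establish an asymptotically $\NB(\alpha,p_n)$ degree sequence with $O_{\mathrm p}(n^{1/2})$ fluctuations, and invoke Janson--{\L}uczak. Two steps, however, are genuinely gapped as written. First, the transfer back to $\ganm$: the claim that ``the expected number of loops and multiple edges is $\Theta(1)$, so a coupling/sprinkling argument shows the component sizes agree up to $\op(\eps n)$'' does not go through. A step-by-step maximal coupling of the two processes succeeds for all $m=O(n)$ steps only with probability bounded away from $1$ (the per-step total-variation discrepancy is $\Theta(1/n)$), and once the trajectories diverge they cannot be cheaply re-merged; so no coupling makes the two graphs agree up to $O(1)$ edges \whp. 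What does work — and what the paper does — is a Radon--Nikodym-type comparison: the simple process is the multigraph process conditioned on staying simple, and on the event $\sum_v d_v^3=O(n)$ the ratio of the step-wise normalizing constants is $1+O(1/n)$, giving $\Pr(\ganm\in\cG)\le B\Pr(\gganm\in\cG)+o(1)$. This suffices to transfer \whp{} statements (which is all you need), and the degree-cube bound for the simple process must itself be bootstrapped from the multigraph by a stopping-time argument, since you cannot assume it a priori.

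Second, the application of the configuration-model theorem in the regime $\eps\to0$ is underspecified in a way that hides the whole point of the $\eps^3n$ improvement. When $\eps\to0$ the limiting degree variable $D$ satisfies $\E D(D-2)=0$, so the standard supercritical statement only yields $L_1/n\pto0$ and gives no rate. One must use the critical-window form (Janson--{\L}uczak Theorem~2.4), which states $L_1=\bigpar{\tfrac{2\E D}{\E D(D-1)(D-2)}+\op(1)}\zeta_n$ with $\zeta_n=\sum_v d_v(d_v-2)$, valid precisely when $n^{-2/3}\zeta_n\to\infty$; since $\zeta_n/n=\Theta(\eps)$ by your concentration estimates, this condition is $\Theta(\eps n^{1/3})\to\infty$, i.e.\ exactly $\eps^3n\to\infty$. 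You are right that $\zeta_n$ must be controlled to $o(\eps n)$, and the $O_{\mathrm p}(n^{1/2})$ errors from the urn/birth-process analysis do this, but without naming the critical-window input your ``quantitative supercritical theorem'' does not exist in the form you need. Finally, the two regimes must be stitched together by verifying $\rho_a(\eps)=\tfrac{2\eps}{1+2/a}+O(\eps^2)$ as $\eps\searrow0$, so that the single formula \eqref{eq:thm:main:giant:L1} covers both cases.
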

\noindent
\refR{rem:NB:rho} also shows that~$\lim_{\alpha \to \infty}\rho_\alpha(\eps)=\rho_\infty(\eps)$. 
Recognizing~\eqref{rhooo} as a standard branching process equation, 
the largest component of the Erd{\H{o}}s--R{\'e}nyi process~$(G^{\mathrm{ER}}_{n,m})_{m \ge 0}$
is well-known~\cite{BR2009,BR2013} to satisfy
\begin{equation}\label{eq:ER:L1}
L_1\Bigpar{G^{\mathrm{ER}}_{n,\tfrac{n}{2}(1+\eps)}}=\rho_\infty(\eps)n \cdot (1+o_p(1)) 
\end{equation}
when~$\eps=O(1)$ and~$\eps^3n \to \infty$.  
From \refT{thm:main} it is easy to deduce (using continuity of~$\rho_\infty$) 
that the preferential attachment process has the same giant component~behaviour 
when~$\alpha(n) \to \infty$ sufficiently~fast. 
\begin{corollary}[Supercritical Erd{\H{o}}s--R{\'e}nyi behaviour]\label{cor:main}
Assume that~$\alpha=\alpha(n) \to \infty$ as~$n \to \infty$. 
If~$\eps=\eps(n)=O(1)$, $\eps^3 n \to \infty$ and~$\alpha \eps \to \infty$ as~$n \to \infty$, 
then 
\begin{equation}\label{eq:ER:alpha}
L_1\bigpar{\tfrac{n}{2}(1+\eps)} = 
L_1\Bigpar{G^{\alpha}_{n,\tfrac{n}{2}(1+\eps)}}
= \rho_\infty(\eps) n \cdot  (1+\op(1)) .
\end{equation}
In particular, $\eps=O(1)$ and~$\eps^3 n \to \infty$ imply~\eqref{eq:ER:alpha} when~$\alpha = \Omega(n^{1/3})$. 
\end{corollary}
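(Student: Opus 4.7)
The plan is to deduce \refC{cor:main} from \refT{thm:main} applied with the limit $a=\infty$, after absorbing the small discrepancy between $\mc$ and $n/2$ into a reparametrization of $\eps$. Since $\mc=\frac{n\alpha}{2(\alpha+1)}$, we have $\tfrac{n}{2}=\mc(1+\alpha^{-1})$, so for each $\eps>0$ we may write $\tfrac{n}{2}(1+\eps)=\mc(1+\eps')$ with
\begin{equation*}
\eps' \;:=\; (1+\alpha^{-1})(1+\eps)-1 \;=\; \eps + \alpha^{-1}(1+\eps).
\end{equation*}
Under $\eps=O(1)$ and $\alpha\eps\to\infty$, the extra term is $\alpha^{-1}(1+\eps)=o(\eps)$, so $\eps'=\eps(1+o(1))$; in particular $\eps'>0$, $\eps'=O(1)$, and $(\eps')^3 n\sim \eps^3 n\to\infty$. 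Hence \refT{thm:main} (applied with $a=\infty$ and $\eps'$ in place of $\eps$) yields
\begin{equation*}
L_1\bigpar{\tfrac{n}{2}(1+\eps)} \;=\; L_1\bigpar{\mc(1+\eps')} \;=\; \rho_\infty(\eps')\, n\, (1+\op(1)).
\end{equation*}

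The remaining step is to replace $\rho_\infty(\eps')$ by $\rho_\infty(\eps)$, which I would handle via a standard subsequence argument combined with the available regularity of $\rho_\infty$. If we pass to a subsequence along which $\eps$ is bounded away from $0$, then $\eps,\eps'$ eventually lie in a fixed compact subinterval of $(0,\infty)$; continuity of $\rho_\infty$ (guaranteed by \refT{thm:main1} together with \eqref{rhooo}) is uniform on such intervals, while $\rho_\infty$ stays bounded below there, so $\eps'-\eps=o(1)$ forces $\rho_\infty(\eps')=\rho_\infty(\eps)(1+o(1))$. If instead $\eps\to 0$, the linear asymptotic \eqref{eq:rho:linear} (valid for $a=\infty$, where $2/\alpha=0$) gives $\rho_\infty(\eps)\sim 2\eps$ and similarly $\rho_\infty(\eps')\sim 2\eps'\sim 2\eps$, so $\rho_\infty(\eps')/\rho_\infty(\eps)\to 1$. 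In either case we conclude $\rho_\infty(\eps')=\rho_\infty(\eps)(1+o(1))$, which establishes~\eqref{eq:ER:alpha}.

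The `in particular' statement is then immediate: if $\alpha\ge c n^{1/3}$ for some constant $c>0$, then $\alpha\eps\ge c n^{1/3}\eps = c(\eps^3 n)^{1/3}\to\infty$, so the hypothesis $\alpha\eps\to\infty$ is automatic and the main part of the corollary applies. The only mildly delicate point in the whole argument is the interface between the two regimes in the previous paragraph: continuity of $\rho_\infty$ alone degenerates as $\eps\to 0$, so one must invoke the explicit linear asymptotic~\eqref{eq:rho:linear} in that regime, while uniform continuity on compacts suffices away from $0$.
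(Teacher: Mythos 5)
Your proof is correct and follows exactly the route the paper intends (the paper only sketches it as ``easy to deduce using continuity of $\rho_\infty$''): rewrite $\tfrac n2(1+\eps)=\mc(1+\eps')$ with $\eps'=\eps+\alpha^{-1}(1+\eps)=\eps(1+o(1))$ under $\alpha\eps\to\infty$, apply \refT{thm:main} with $a=\infty$, and pass from $\rho_\infty(\eps')$ to $\rho_\infty(\eps)$. Your observation that plain continuity is insufficient in the regime $\eps\to0$ and that one must invoke the linear asymptotic \eqref{eq:rho:linear} to get a multiplicative $1+o(1)$ is a genuine (if minor) point of care that the paper's one-line justification glosses over.
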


For the interested reader we include two remarks about 
the function~$\rho_\ga$ and the condition~$\eps=O(1)$. 
\begin{remark}
As shown in the proof in \refS{sec:L1}, $\rho_\ga(\eps)$ is an analytic
function of $\ga\in(0,\infty)$  and $\eps\in(0,\infty)$; moreover,
it extends analytically to $\eps\in[0,\infty)$, as shown in \refR{Rfit}.
\end{remark}

\begin{remark}\label{Rnobound}
The condition~$\eps=O(1)$ in Theorems~\ref{thm:main1}--\ref{thm:main} 
can be removed.
This is a trivial consequence of the monotonicity of the process and the
fact that $\rho_\ga(\eps)\to1$ as $\eps\to\infty$, see \refR{rem:NB:rho}:
it suffices to consider the case $\eps\to\infty$, and then we may for any
$\eta>0$ choose $\eps_0$ such that $\rho_a(\eps_0)>1-\eta$ and then
\whp{} (i.e., with probability tending to one as~$n \to \infty$) we have
\begin{align}
n\ge L_1(\mc(1+\eps)) \ge L_1(\mc(1+\eps_0)) \ge (\rho_a(\eps_0)-\eta)n\ge
(1-2\eta)n\ge(\rho(\eps)-2\eta)n, 
\end{align}
which also implies that~$L_2\le 2\eta n$ \whp.
\end{remark}

\subsection{Comments}\label{sec:comments}
To motivate why the phase transition of~$G^{\alpha}_{n,m}$
has some Erd{\H{o}}s--R{\'e}nyi features (like linear growth of the giant component), 
let us point out that distinct tree-components~$C_1,C_2$ merge with probability proportional~to
\begin{equation}\label{eq:tree:rate}
\sum_{v \in C_1}(d_v+\alpha) \cdot \sum_{w \in C_2}(d_w+\alpha) = \bigsqpar{(2+\alpha)|C_1|-2} \cdot \bigsqpar{(2+\alpha)|C_2|-2}.
\end{equation}
In concrete words, trees thus merge with rates that are approximately proportional 
to the rates~$|C_1| \cdot |C_2|$ from the Erd{\H{o}}s--R{\'e}nyi process. 
Since the phase transition is usually dominated by the contribution of tree-like components 
(e.g., by counting the vertices in `small' trees outside of the giant component), 
after some hand-waving it thus becomes plausible to observe some key features of the Erd{\H{o}}s--R{\'e}nyi reference model. 

Our actual proof takes a surprisingly simple different route (instead of trying to leverage the above tree-heuristic).
Indeed, we first show that a natural multigraph variant of~$G^{\alpha}_{n,m}$ has, conditioned on its degree sequence, 
the same distribution as the well-known configuration model for random multigraphs. 
By noting that in the multigraph variant the degrees evolve nearly independently (when looked at in the right~way), 
we can also get very strong control over the resulting asymptotic degree sequence 
of negative binomial form
(which contrasts not only the Poisson distribution of the \ER{}~model, 
but also the power-law distributions observed in many preferential attachment~models).
These two results together allow us to study the multigraph variant of~$G^{\alpha}_{n,m}$ via standard results for the configuration model, 
which then easily gives the asymptotic size of the giant component in~$G^{\alpha}_{n,m}$.
See \refS{sec:overview} for a detailed proof overview.

We mention that our arguments are quite different from Pittel~\cite{Pittel},
who  studies the multigraph variant of~$G^{\alpha}_{n,m}$ via involved
enumerative techniques.  
In fact, he notes~\cite[p.~643]{Pittel} that 
by conditioning on the degree sequence,
it might be possible to use known results for the configuration model;  
however, in the paper he used a different approach, 
partly because it seemed difficult to  
verify the required degree conditions. 
(We will see that this is not so difficult, using a continuous-time
construction. Moreover, we have the advantage of being able to use a
stream-lined version~\cite{JL2009} of the 
original phase transition~result~\cite{MR1998} for the configuration~model.)

Finally, while the focus of the present paper is on the giant component,
it is important to note that our proof method 
can also be used to study other properties of~$\ganm$ such as the $k$-core, see~\refS{sec:other}.

\subsection{Organization}
In Section~\ref{sec:overview} we give a detailed overview of our proof strategy. 
In particular, we state several technical auxiliary results, 
which we later prove in \refS{sec:approx:proofs}. 
Furthermore, in \refS{sec:heuristic} we give a heuristic argument of our main giant component result, 
which we make rigorous in \refS{sec:L1}. 
In \refS{sec:final} we then discuss extensions, variants, other properties, and open problems. 
Finally, \refApp{sec:mc:heuristic} contains a simple heuristic for the phase transition location~$\mc$, 
and \refApp{sec:previous} shows how our results are compatible with previous~work.

\section{Proof structure}\label{sec:overview} 
In this section we outline our high-level proof strategy 
for Theorems~\ref{thm:main1}--\ref{thm:main}, which proceeds roughly as follows.  
After introducing a suitable random multigraph variant~$(G^{\alpha,*}_{n,m})_{m \ge 0}$ of~$(G^{\alpha}_{n,m})_{m \ge 0}$,
we shall derive three basic auxiliary results 
(whose proofs are deferred to \refS{sec:approx:proofs}). 
First, Theorem~\ref{thm:transfer} shows that results for the random multigraph~$G^{\alpha,*}_{n,m}$ 
transfer to the original  random graph~$G^{\alpha}_{n,m}$. 
Secondly, Theorem~\ref{thm:equivalence} shows that, by conditioning on its degree sequence, 
we can study~$G^{\alpha,*}_{n,m}$ via the widely studied configuration model~$\Gxdn$ for random multigraphs. 
Thirdly, Theorem~\ref{thm:degree} determines the typical degree sequence of~$G^{\alpha,*}_{n,m}$. 
The crux is that these three results  together allow us to study~$G^{\alpha}_{n,m}$ by applying 
standard results for random graphs with given degree sequences
(see Theorem~\ref{thm:giant:conf}), 
and in Section~\ref{sec:heuristic} we outline how this reduction makes our 
main giant component result plausible (the full details are deferred to Section~\ref{sec:L1}).

\subsection{A multigraph variant: reduction and auxiliary results}\label{sec:approx}
We start by introducing a convenient multigraph variant of~$(G^{\alpha}_{n,m})_{m \ge 0}$, which allows for loops and multiple edges
(this natural model was also used by \citet{Pittel}, and further studied
from different aspects in~\cite{BCLSV2011,RS2012,Jan2018}).  
We write~$(G^{\alpha,*}_{n,m})_{m \ge 0}$ for the
random multigraph process 
with fixed vertex set~$[n]$ and parameter~$\alpha\in (0,\infty)$, 
where edges are added one-by-one (starting with no edges) such that
the next edge connects distinct vertices~$v$ and~$w$ with probability
proportional to~$2(d_v+\alpha) (d_w+\alpha)$,
and forms a loop at vertex~$v$ with probability proportional to
$(d_v+\alpha)(d_v+1+\alpha)$;
here~$d_v$ denotes the current degree of~$v$ 
(as usual, each loop is counted as two edges at its~endpoint). 
In this paper we shall first prove our main results 
for the random multigraph~$G^{\alpha,*}_{n,m}$, 
which turns out to be much easier to analyze than~$G^{\alpha}_{n,m}$; 
we record this intermediate goal for later~reference. 
\begin{theorem}[Main multigraph result]\label{Tmulti}
Theorems~\ref{thm:main1}--\ref{thm:main}
also hold for the random multigraph~$G^{\alpha,*}_{n,m}$. 
\end{theorem}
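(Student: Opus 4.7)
The plan is to combine the auxiliary results (Theorems~\ref{thm:transfer}--\ref{thm:degree}) stated in \refS{sec:approx}: specifically~\refT{thm:equivalence} and~\refT{thm:degree} together reduce~\refT{Tmulti} to a configuration-model giant-component computation, which~\refT{thm:giant:conf} then settles. By~\refT{thm:equivalence}, conditioned on its degree sequence~$\dn$, the multigraph~$\Gani[m]$ is distributed as the configuration multigraph~$\Gxdn$. By~\refT{thm:degree}, at $m=\mc(1+\eps)$ the degree sequence is concentrated around an asymptotic distribution $D\sim\NB(\alpha,p)$ with $p=(\alpha+1)/(\alpha+2+\eps)$; this is natural in view of a continuous-time Yule-type construction that gives each vertex initial weight $\alpha$ and lets its degree grow as a linear birth process, keeping vertex degrees nearly independent. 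Feeding this into the stream-lined configuration-model giant-component result of~\cite{JL2009} (i.e., \refT{thm:giant:conf}) then yields, in one shot, both the asymptotic size of~$L_1$ and the bound~$L_2=\op(L_1)$.

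The remaining task is to identify~$\rho_\alpha(\eps)$ explicitly. In the configuration model with limiting degree~$D$, the standard branching-process argument says that a half-edge leads to an infinite subtree with probability~$1-q$, where~$q\in[0,1]$ is the smallest solution of~$q=g(q)$ with~$g$ the p.g.f.~of the size-biased--shifted degree $\tilde D-1$. A short calculation from the NBin p.g.f.~$G_D(s)=\bigpar{p/(1-(1-p)s)}^\alpha$ shows $\tilde D-1\sim\NB(\alpha+1,p)$, so
\begin{equation*}
q=\Bigpar{\frac{p}{1-(1-p)q}}^{\alpha+1},
\end{equation*}
and the giant fraction is $\rho_\alpha(\eps)=1-G_D(q)=1-q^{\alpha/(\alpha+1)}$. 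The mean offspring $(\alpha+1)(1-p)/p=1+\eps$ pinpoints criticality at~$\eps=0$, and Taylor expansion at~$(q,\eps)=(1,0)$ gives $1-q\sim 2\eps(\alpha+1)/(\alpha+2)$, hence~\eqref{eq:rho:linear}. The Poisson approximation $\NB(\alpha,p)\Rightarrow\Po(1+\eps)$ as $\alpha\to\infty$ (since $\alpha(1-p)/p\to 1+\eps$) reduces the fixed-point equation to $q=e^{-(1+\eps)(1-q)}$, recovering~\eqref{rhooo}.

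The main obstacle is \emph{uniformity}: we need \refT{thm:giant:conf} to apply with error bounds uniform in $\alpha=\alpha(n)\to a\in(0,\infty]$ and all the way into the critical window $\eps^3n\to\infty$ (with $\eps=o(1)$ permitted). The formulation in~\cite{JL2009} is flexible enough to accommodate this, but only if~\refT{thm:degree} supplies sufficiently strong quantitative degree-sequence control: convergence of the empirical degree fractions, uniform second-moment bounds, and suitable tail estimates that remain valid as the target distribution drifts toward~Poisson when $\alpha\to\infty$. Establishing these quantitative degree estimates is where the real technical work sits, and is deferred to~\refS{sec:approx:proofs}; once they are in place,~\refT{Tmulti} follows mechanically from the reduction above.
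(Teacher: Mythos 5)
Your high-level strategy coincides with the paper's: condition on the degree sequence via \refT{thm:equivalence}, approximate it by $\NB(\alpha,p_n)$ via \refT{thm:degree}, and feed the result into \refT{thm:giant:conf}; your fixed-point computation ($q=\bigl((1-p)/(1-pq)\bigr)^{\alpha+1}$, $\rho_\alpha=1-q^{\alpha/(\alpha+1)}$, the Taylor expansion giving \eqref{eq:rho:linear}, and the Poisson limit) is exactly equivalent to \eqref{sofie}--\eqref{emma} and \eqref{klm}. However, there is a genuine gap in how you propose to handle the regime $\eps=\eps(n)\to0$ with $\eps^3n\to\infty$, which is the whole point of the theorem. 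You frame the difficulty as one of ``uniformity'' of \refT{thm:giant:conf} and assert that \cite{JL2009} is ``flexible enough,'' but part \ref{thm:supercr} of that theorem requires a \emph{fixed} limit law $D$ with $\E D(D-2)>0$; when $\eps\to0$ the limiting degree distribution satisfies $\E D(D-2)=0$, so the supercritical statement is simply inapplicable (part \ref{thm:subcr} only gives $L_1=o_p(n)$, with no rate). No amount of quantitative degree control rescues part \ref{thm:supercr} here, because the answer $L_1\asymp\eps n=o(n)$ lives on a different scale than the $L_1/n\pto\text{const}>0$ conclusion of that part.

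The paper's resolution is to split into two cases along subsequences $\eps\to\epsoo$. For $\epsoo>0$ one argues as you do. For $\epsoo=0$ one instead invokes the \emph{critical-window} statement, part \ref{thm:critical}, with the normalization $\muu_n=\sum_v d_v(d_v-2)$: by \refT{thm:degree} and \refR{rem:degree}, $\muu_n/n\sim\eps/(1+a^{-1})$, so the hypothesis $n^{-2/3}\muu_n\to\infty$ of part \ref{thm:critical} is \emph{precisely} the condition $\eps^3n\to\infty$, and \eqref{eq:critical} gives $L_1\sim 2\eps n/(1+2/a)$ and $L_2=o_p(\muu_n)=o_p(L_1)$ directly. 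Only then does one need your Taylor expansion $\rho_a(\eps)\sim2\eps/(1+2/a)$, to stitch the two cases into the single formula \eqref{eq:thm:main:giant:L1}. Two further technical points you elide: since $\dn$ is random while \refT{thm:giant:conf} is stated for deterministic sequences, the paper passes through a Skorohod coupling to make \eqref{l3a}--\eqref{l3b} hold almost surely before conditioning; and in the case $\epsoo=0$ one must check that the $o_p(\gd_n)$ error in \eqref{l3b} is $o(\eps)$, which is why $\gd_n\ll n^{-1/3}\ll\eps$ is chosen. With part \ref{thm:critical} put in its proper place, your argument becomes the paper's.
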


\subsubsection{Approximating~$G^{\alpha}_{n,m}$ by
  the multigraph~$\gganm$}\label{sec:approx1} 
Our first auxiliary result allows us to study~$G^{\alpha}_{n,m}$ via the 
random multigraph~$G^{\alpha,*}_{n,m}$ 
(so that Theorems~\ref{thm:main1}--\ref{thm:main} eventually follow from \refT{Tmulti}).
In words, Theorem~\ref{thm:transfer} implies that 
\whp--results for $G^{\alpha,*}_{n,m}$ routinely transfer to~$G^{\alpha}_{n,m}$ 
when~$m=O(n)$ and~$\alpha = \Omega(1)$.
This lemma is basically the same as 
\cite[Corollary~3]{Pittel};
one difference is that we do not restrict to constant~$\alpha$.
(Another difference
is that we only give an $o(1)$ bound for the  additive term 
in~\eqref{eq:transfer}--\eqref{eq:transfer:proc}.
This can easily be improved, but we do not need this.)
\begin{theorem}
  [Transfer statement: from~$G^{\alpha,*}_{n,m}$ to~$G^{\alpha}_{n,m}$,
  partly~\cite{Pittel}]
  \label{thm:transfer}%
Given~$C,\alpha_0>0$, there is~$B=B(C,\alpha_0) > 0$ such that the 
following holds whenever~$1 \le m \le C n$ and $\alpha \ge \alpha_0$.   
For any set~$\cG_{n}$ of graphs with $m$~edges and 
vertex set~$[n]$, we have 
\begin{equation}\label{eq:transfer}
\Pr(G^{\alpha}_{n,m} \in \cG_{n}) \le B \cdot \Pr( G^{\alpha,*}_{n,m} \in \cG_{n})
+ o(1).
\end{equation}
\end{theorem}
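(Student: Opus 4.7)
The plan is a direct Radon--Nikodym comparison of the two processes on simple graphs. For any simple graph~$G$ on~$[n]$ with $m$~edges, a short computation shows that the running product of endpoint weights $\prod_i (d_v^{(i)}+\alpha)(d_w^{(i)}+\alpha)$ along any edge-ordering~$\pi$ of~$G$ equals the falling factorial $\prod_v \alpha^{\overline{D_v}}$, depending on~$\pi$ only through the final degree sequence. Hence both $\Pr(G^\alpha_{n,m}=G)$ and $\Pr(G^{\alpha,*}_{n,m}=G)$ share this numerator, but differ in their step-by-step denominators: $G^{\alpha,*}_{n,m}$ uses the deterministic $Z^*_i:=(2i+n\alpha)(2i+n\alpha+1)$, whereas $G^\alpha_{n,m}$ uses the ordering-dependent $Z'_i(\pi):=\sum_{\{v,w\}\notin E_i(\pi),\,v\ne w}(d_v+\alpha)(d_w+\alpha)$. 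Writing $q_i(\pi):=2Z'_i(\pi)/Z^*_i\in(0,1]$ for the multigraph probability of producing a valid `simple non-repeat' edge at step~$i+1$, summing over the $m!$ orderings of~$G$ yields
\begin{equation*}
\frac{\Pr(G^\alpha_{n,m}=G)}{\Pr(G^{\alpha,*}_{n,m}=G)} \;=\; \frac{1}{m!}\sum_{\pi}\prod_{i=0}^{m-1}\frac{1}{q_i(\pi)}.
\end{equation*}
It therefore suffices to find a `good' event~$\cD$ (measurable with respect to the final graph) on which $\prod_i 1/q_i(\pi)\le B=B(C,\alpha_0)$ uniformly in~$\pi$, and then to show $\Pr(G^\alpha_{n,m}\notin\cD)=o(1)$.

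For the uniform bound, the monotonicity $d_v^{(i)}\le D_v$ and $E_i(\pi)\subseteq E(G)$ yields the $\pi$-free estimate
\begin{equation*}
1-q_i(\pi) \;\le\; \frac{(2i+n\alpha)+\sum_v(D_v+\alpha)^2+2\sum_{\{v,w\}\in E(G)}(D_v+\alpha)(D_w+\alpha)}{(2i+n\alpha)(2i+n\alpha+1)}.
\end{equation*}
Summing in~$i$, together with the integral estimate $\sum_{i<m}1/(2i+n\alpha)^2=O\bigl(m/[n\alpha(2m+n\alpha)]\bigr)$ and the elementary identities $\sum_{\{v,w\}\in E(G)}(D_v+D_w)=\sum_v D_v^2$ and $\sum_{\{v,w\}\in E(G)}D_vD_w\le\tfrac12\sum_v D_v^3$, a short case split on whether $m/n\alpha$ is small or large gives $\sum_{i<m}(1-q_i(\pi))\le C'(C,\alpha_0)$ on the event
\begin{equation*}
\cD \;:=\; \bigl\{\textstyle\sum_v D_v^2\le Kn,\ \sum_v D_v^3\le Kn\bigr\}
\end{equation*}
for $K=K(C,\alpha_0)$ sufficiently large. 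Hence $\prod_i 1/q_i(\pi)\le B:=e^{2C'}$ on~$\cD$, so $\Pr(G^\alpha_{n,m}=G)\le B\Pr(G^{\alpha,*}_{n,m}=G)$ for every $G\in\cD$. Combining,
\begin{equation*}
\Pr(G^\alpha_{n,m}\in\cG_n)\;\le\; \Pr(G^\alpha_{n,m}\in\cG_n\cap\cD)+\Pr(G^\alpha_{n,m}\notin\cD)\;\le\; B\Pr(G^{\alpha,*}_{n,m}\in\cG_n)+\Pr(G^\alpha_{n,m}\notin\cD),
\end{equation*}
reducing the task to showing $\Pr(G^\alpha_{n,m}\notin\cD)=o(1)$.

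The main obstacle is this last moment bound for the \emph{simple} process: invoking \refT{thm:degree} here would be circular, since the proofs of Theorems~\ref{thm:main1}--\ref{thm:main} use \refT{thm:transfer} itself. I plan to argue directly. A uniform \emph{a~priori} lower bound $Z'_i\ge c(2i+n\alpha)^2$ with $c=c(C,\alpha_0)>0$ follows from $Z'_i\ge\tfrac12\binom{n}{2}\alpha^2$ (each of the $\ge\tfrac12\binom{n}{2}$ available non-edges contributes weight $\ge\alpha^2$) and $Z^*_i=O(n^2(C+\alpha)^2)$, the infimum over $\alpha\ge\alpha_0$ being attained at~$\alpha_0$. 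Consequently the conditional probability at step~$i+1$ that vertex~$v$ receives the new edge is at most $(d_v+\alpha)(2i+n\alpha)/Z'_i = O\bigl((d_v+\alpha)/(2i+n\alpha)\bigr)$. This makes $d_v+\alpha$ stochastically dominated by a P\'olya-urn-type growth process, yielding exponential right-tails on each~$D_v$; the bounds $\sum_v D_v^k=O_p(n)$ for $k=2,3$ then follow by summing variances and applying Chebyshev (the weak negative correlation from $\sum_v D_v=2m$ is easily handled).
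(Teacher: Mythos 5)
Your Radon--Nikodym comparison of the two processes (the per-step ratio of normalizing constants, the reduction to a degree-moment event $\cD$ determined by the final graph, and the a priori bound $Z_i'\ge c\,(2i+n\alpha)^2$) is sound and closely parallels the paper's \refL{lem:transfer:proc}, merely organized as a sum over edge-orderings rather than a conditioning on the whole graph sequence. The genuine gap is exactly where you sense danger: the claim $\Pr(G^{\alpha}_{n,m}\notin\cD)=o(1)$. There are two problems. First, a one-sided stochastic domination of each $d_v$ (an upper bound on its increment probabilities) controls marginal moments $\E D_v^k=O(1)$, but it gives no handle on $\operatorname{Cov}(D_v^3,D_w^3)$: domination of marginals says nothing about joint behaviour, the \emph{simple} process has no exchangeable or independent-birth-process representation, and the crude bound $\operatorname{Cov}(D_v^3,D_w^3)\le\E[D_v^3D_w^3]=O(1)$ summed over $\Theta(n^2)$ pairs yields $\Var\bigl(\sum_v D_v^3\bigr)=O(n^2)$, which is useless in Chebyshev. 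So ``the weak negative correlation is easily handled'' is precisely the hard part, and a two-sided control of the increment probabilities (needed for a genuine covariance estimate) would itself require knowing the degrees are well-behaved --- the circularity you were trying to avoid. Second, even granting $\sum_v D_v^3=\Op(n)$, Markov at scale $Kn$ only gives $\Pr(\notin\cD_K)\le\epsilon(K)$ with $\epsilon(K)\to0$ as $K\to\infty$, while your constant $B=B(K)$ blows up with $K$; this proves only a weaker ``for every $\epsilon$ there is $B_\epsilon$'' statement, not the stated form with a single $B$ and an additive $o(1)$ (though the weaker form would in fact suffice for the paper's applications).

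The paper closes this gap with a stopping-time bootstrap that avoids any direct analysis of the simple process's degrees, and this is the idea your proposal is missing. Decompose the event $\{Z(G^{\alpha}_{n,m-1})>An\}$ (with $Z(G):=\sum_v d_v^3$) according to the first step $j$ at which $Z(G_j)>An$; the comparison lemma applies to the truncated sequence $(G_0,\dots,G_j)$ because $Z(G_{j-1})\le An$, so summing over $j$ gives $\Pr\bigl(Z(G^{\alpha}_{n,m-1})>An\bigr)\le O(1)\cdot\Pr\bigl(Z(G^{\alpha,*}_{n,m-1})>An\bigr)$. The right-hand side is $o(1)$ by \refT{thm:degree}, whose proof uses only the multigraph's independent birth-process embedding and is therefore not circular. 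Grafting this step onto your framework (your $\cD$ is monotone in the process, so the same first-violation decomposition works verbatim) repairs the proof.
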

\begin{remarkttt}\label{rem:transfer}%
The proof shows more generally that, for any set~$\cG_{n,m}$ of graph 
sequences~$(G_0,\ldots, G_m)$ with vertex set~$[n]$, we have 
\begin{equation}\label{eq:transfer:proc}
\Pr( (G^{\alpha}_{n,i})_{0 \le i \le m} \in \cG_{n,m}) \le B \cdot \Pr( (G^{\alpha,*}_{n,i})_{0 \le i \le m} \in \cG_{n,m} ) + o(1). 
\end{equation}
\end{remarkttt}
\noindent
The proof strategy is to compare the stepwise conditional probabilities of the added edges, where 
we may clearly restrict to simple graph sequences (without loops or multiple edges). 
By construction, the conditional probability of adding the new edge~$\{v,w\}$ is in both processes 
proportional to~$2(d_v+\alpha)(d_v+\alpha)$, but the normalizing factors in the denominator differ 
slightly (since only one of these processes allows for loops and multiple edges; 
see~\eqref{eq:Gi}--\eqref{eq:Hi} in~\refS{sec:transfer}). 
It turns out that during the first~$m=O(n)$ steps the corresponding normalizing factors of both 
processes are extremely close together, which eventually allows us to establish~\eqref{eq:transfer:proc}, 
from which inequality~\eqref{eq:transfer} follows immediately.  
See~\refS{sec:transfer} for the full details.

\subsubsection{Reducing the multigraph $\gganm$ to the configuration model}\label{sec:approx2} 
Our second auxiliary result allows us to study~$G^{\alpha,*}_{n,m}$ via the
well-known 
\emph{configuration model~$\Gxdn$} for random multigraphs,
see \eg{}~\cite{BBRG,FKRG,Hofstad2017};
it says
that~$G^{\alpha,*}_{n,m}$ conditioned 
on its degree sequence~$d(G^{\alpha,*}_{n,m})=\dn=\dnx$
has the same distribution as the
configuration model~$\Gxdn=([n],E_\dn)$, which we for concreteness here
define as follows:
Let $\cS_\dn$ be the 
set of all $2m$-element sequences in which each vertex~$v \in [n]$
appears~$d_v$ times,
 pick a uniform random vertex sequence~$\wn=(w_1, \ldots, w_{2m}) \in
 \cS_\dn$, and define  
the edge multiset~$E_\dn:=\set{w_1w_2, \ldots, w_{2m-1}w_{2m}}$.
\unskip
\footnote{This construction indeed gives the usual configuration model,
  since~$E_\dn$ has the same distribution as the edges of a uniform random
  matching of the $2m$-element multiset in which each vertex~$v \in [n]$
  appears~$d_v$~times.} 
As said above, this property was noted (but not exploited) by
\citet{Pittel} (at least after conditioning the multigraph on being simple);
it has also been noted or used in~\cite{RS2012,Jan2018}.
\begin{theorem}
  [Conditional equivalence: $G^{\alpha,*}_{n,m}$ and~$\Gxdn$,~\cite{Pittel,RS2012,Jan2018}]    
  \label{thm:equivalence}%
For any~$m \ge 1$ and any degree sequence~$\dn=(d_v)_{v \in [n]}$ with~$\sum_{v \in [n]}d_v=2m$, 
the random multigraph~$G^{\alpha,*}_{n,m}$ conditioned on having degree sequence~$\dn$ 
has the same distribution as the configuration model~$\Gxdn$. 
In other words,
for any set~$\cG_{n}$ of multigraphs with $m$ edges and 
vertex set $[n]$, we~have 
\begin{equation}\label{eq:equivalence}
  \Pr\bigpar{G^{\alpha,*}_{n,m} \in \cG_{n} \mid d(G^{\alpha,*}_{n,m}) = \dn}
  = \Pr\bigpar{\Gxdn \in \cG_{n}} .
\end{equation}
\end{theorem}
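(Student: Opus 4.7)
My plan is to realise the multigraph process $(G^{\alpha,*}_{n,m})_{m \ge 0}$ as consecutive pairs of draws from a classical P\'olya urn on the vertex set $[n]$, and then to invoke the well-known exchangeability of such urns. Concretely, I consider an urn with $n$ colours, initialised with weight $\alpha$ on each $v \in [n]$, and successively draw a sequence $(W_1, \ldots, W_{2m})$ by picking a vertex with probability proportional to its current weight and then incrementing that weight by one; I let $E_i := \{W_{2i-1}, W_{2i}\}$ be the candidate edge produced at step~$i$.

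The first step is to verify that the candidate multigraph $([n], \{E_1, \ldots, E_m\})$ has the same law as $G^{\alpha,*}_{n,m}$. To this end, note that after the first $2i$ draws the count of $v$ equals its current degree $d_v$, so the P\'olya rule assigns $\Pr(W_{2i+1}=v) \propto d_v+\alpha$ and, conditionally on $W_{2i+1}=v$, $\Pr(W_{2i+2}=w) \propto d_w+\alpha+\mathbbm{1}\{w=v\}$. Summing over the two orientations of a non-loop pair $\{v,w\}$ with $v \ne w$ then gives edge-probability $2(d_v+\alpha)(d_w+\alpha)/Z_i$, while the single orientation of a loop at $v$ gives $(d_v+\alpha)(d_v+1+\alpha)/Z_i$, where $Z_i=(2i+n\alpha)(2i+n\alpha+1)$; these are exactly the one-step transition probabilities defining $(G^{\alpha,*}_{n,m})_{m \ge 0}$, so the two processes are coupled step-by-step.

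The second step is exchangeability: a direct telescoping of the P\'olya rule yields, for any sequence $(w_1, \ldots, w_{2m}) \in [n]^{2m}$,
\[
\Pr\bigpar{W_1=w_1, \ldots, W_{2m}=w_{2m}} = \frac{\Gamma(n\alpha)}{\Gamma(n\alpha+2m)} \prod_{v \in [n]} \frac{\Gamma(\alpha+d_v)}{\Gamma(\alpha)},
\]
where $d_v$ counts the occurrences of $v$ in the sequence. Since the right-hand side depends only on the count vector $\dn$, the conditional distribution of $(W_1, \ldots, W_{2m})$ given that vertex $v$ appears exactly $d_v$ times for every $v \in [n]$ is uniform on $\cS_\dn$; consecutively pairing such a uniform sample then gives exactly $\Gxdn$ by its definition in the theorem statement, which yields \eqref{eq:equivalence}. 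The main obstacle is arguably the bookkeeping in the first step, where one must carefully separate loops from non-loop pairs (and track the two orientations of the latter) in order to match the asymmetric edge-weight rule of $G^{\alpha,*}_{n,m}$; once this is handled, classical exchangeability immediately concludes the proof.
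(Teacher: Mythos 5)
Your proposal is correct and follows essentially the same route as the paper: realising $G^{\alpha,*}_{n,m}$ via a P\'olya urn with $n$ colours and initial weight $\alpha$ per vertex, matching the one-step edge probabilities (with the same careful split between the two orientations of a non-loop and the single orientation of a loop), and then using exchangeability of the draw sequence to conclude that, conditioned on the counts, the sequence is uniform on $\cS_\dn$ and hence yields $\Gxdn$. Your Gamma-function form of the sequence probability is just the paper's product formula \eqref{eq:excha} rewritten.
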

\noindent
As we shall see in~\refS{sec:equivalence}, the random multigraph process was carefully designed 
to make~\eqref{eq:equivalence} true `by construction'. To make this plausible, recall that the 
conditional probability of adding the non-loop edge~$\{v,w\}$ in the next step is proportional 
to~$2 \cdot (d_v+\alpha) \cdot (d_w+\alpha)$. 
This `factorization'  suggests that we can alternatively construct the~$m$ edges 
of~$G^{\alpha,*}_{n,m}$ as follows: we first generate the vertex sequence~$w_1, \ldots, w_{2m}$ 
(each time vertex~$v$ is  chosen with probability proportional to~$d_{v}+\alpha$) 
and then join them pairwise to the~$m$ edges~$w_1w_2, \ldots, w_{2m-1}w_{2m}$. 
In~\refS{sec:equivalence} we show that (a version of) this construction indeed gives 
the correct distribution (this is the place where the special treatment of loops is crucial). 
It furthermore turns out that every vertex sequence~$(w_1, \ldots, w_{2m}) \in \cS_\dn$ arises 
with the same probability (see~\eqref{eq:excha} in \refS{sec:equivalence}), 
which by the described construction of~$\Gxdn=([n],E_\dn)$ then easily gives the desired 
conditional equivalence. See~\refS{sec:equivalence} for the full details.

\subsubsection{Approximating the degree sequence of $\gganm$}
\label{sec:approx3} 
Our third auxiliary result states that the degree sequence
of~$G^{\alpha,*}_{n,m}$ is 
asymptotically a \emph{negative binomial distribution}~$Y \sim \NB(\alpha,p)$ 
with shape parameter~$\alpha$ and suitable probability~$p=p(\alpha,m/n)$, i.e., 
\begin{equation}\label{eq:NB}
\Pr(Y=r):= 
\binom{\alpha+r-1}{r}(1-p)^{\alpha} p^r 
= \frac{\prod_{0 \le j < r}(\alpha+j)}{r!} (1-p)^{\alpha} p^r 
\qquad \text{for \ $r \in \NN=\{0,1,\ldots\}$.}
\end{equation}
For notational convenience, 
given a degree sequence~$\dn=(d_v)_{v \in [n]}$ 
and an integer~$k\ge0$, we write 
\begin{align}
\label{dxp:mux}
  \pi_k(\dn):= \frac{1}{n}\sum_{v \in [n]} \indic{d_v=k} 
\quad \text{ and } \quad 
  \mux_k(\dn):= \frac{1}{n}\sum_{v\in[n]} d_v^k 
\end{align}
for the proportion of vertices with degree~$k$,  
and the~$k\:$th moment of the degree of a random vertex, respectively. 
We now state a limit theorem for the degrees (aiming at simplicity rather than the
widest generality). 
\begin{theorem}[Degree sequence of~$G^{\alpha,*}_{n,m}$: negative
  binomial]\label{thm:degree}
  Suppose that $\ntoo$, $m=\Theta(n)$, 
  and $\ga=\ga(n)=\Omega(1)$.
  Let\/ $\dn$ be the (random) degree sequence of $\gganm$, and
  let\/ $Y=Y_n\sim \NB(\ga,p_n)$ with
  \begin{align}
    \label{pn}
    \pn:=2m/(n\ga+2m).
      \end{align}
      Then, for any sequence $\go(n)\to\infty$,
      for every integer~$k\ge0$,
  \begin{align}
	\label{l3a}
    \pi_k(\dn)& = \P(Y_n=k)+\op\bigpar{\go(n)n\qqw}, \\
		\label{l3b}
    \mux_k(\dn)& = \E Y_n^k +\op\bigpar{\go(n)n\qqw},
    \end{align}
    where~$\E Y_n^k=O(1)$.
\end{theorem}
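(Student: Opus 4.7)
The plan is to exploit the Yule / pure-birth embedding underlying the Pólya-urn construction from the proof of \refT{thm:equivalence}. Let $(N_v(t))_{v \in [n]}$ be $n$ independent pure-birth processes with $N_v(0) = 0$ and state-dependent birth rate $N_v(t) + \alpha$, and set $T_{2m} := \inf\bigcpar{t \ge 0 : \sum_{v} N_v(t) = 2m}$. Two standard facts drive everything: (i)~at any \emph{deterministic}~$t$ the $N_v(t)$ are iid with $N_v(t) \sim \NB(\alpha, 1-e^{-t})$; and (ii)~the joint law of $(N_v(T_{2m}))_v$ coincides with that of~$\dn$, because both are generated by the same sequential sampling rule (at each step vertex~$v$ is selected with probability $(N_v + \alpha)/(S + n\alpha)$). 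Choosing $t_n$ so that $1 - e^{-t_n} = p_n$ makes $\E\sum_v N_v(t_n) = 2m$.

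The first step is the moment bound: since $p_n/(1-p_n) = 2m/(n\alpha) = O(1/\alpha)$ and $\alpha \ge \alpha_0 > 0$, each factorial moment of $Y_n$ equals $\alpha(\alpha+1)\cdots(\alpha+j-1) \cdot (p_n/(1-p_n))^j = O\bigpar{(2m/n)^j} = O(1)$, and hence $\E Y_n^k = O(1)$ for every~$k$. The second step is to establish \eqref{l3a}--\eqref{l3b} at the deterministic time~$t_n$: by fact~(i) the $N_v(t_n)$ are iid with all moments $O(1)$, so Chebyshev's inequality applied to the iid sums $\sum_v \indic{N_v(t_n) = k}$ and $\sum_v N_v(t_n)^k$ yields
\begin{align*}
\pi_k\bigpar{(N_v(t_n))_v} &= \P(Y_n = k) + \Op\bigpar{n\qqw}, \\
\mu_k\bigpar{(N_v(t_n))_v} &= \E Y_n^k + \Op\bigpar{n\qqw},
\end{align*}
which is in particular $\op\bigpar{\omega(n) n\qqw}$ for any $\omega(n) \to \infty$.

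The third and most delicate step is transferring from the deterministic time~$t_n$ to the stopping time~$T_{2m}$. Since $S(t) := \sum_v N_v(t) \sim \NB(n\alpha, 1-e^{-t})$ satisfies $\E S(t_n) = 2m$ and $\Var S(t_n) = \Theta(n)$, we get $S(t_n) = 2m + \Op\bigpar{n\qq}$; combined with the fact that the total birth rate $S(t) + n\alpha = \Theta(n)$ throughout an $O(n\qqw)$-neighbourhood of~$t_n$, this forces $|T_{2m} - t_n| = \Op\bigpar{n\qqw}$ and $J := |S(T_{2m}) - S(t_n)| = \Op\bigpar{n\qq}$ for the number of births between the two times. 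Since each birth alters $\pi_k$ by $O(1/n)$, we obtain $|\pi_k(\dn) - \pi_k((N_v(t_n))_v)| = \Op\bigpar{n\qqw}$ immediately; for $\mu_k$, the expected per-birth increment of $\sum_v N_v^k$ is $O(1)$ (direct from the birth-rate weighting), and a Freedman-type martingale concentration for the cumulative change then gives $|\mu_k(\dn) - \mu_k((N_v(t_n))_v)| = \Op\bigpar{n\qqw}$.

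The main obstacle is the $\mu_k$ transfer just described: a worst-case per-birth bound based on the maximum degree, which is $\Theta(\log n)$ for iid $\NB$ variables, would introduce a spurious $\log^{k-1} n$ factor, and one must instead exploit the bounded \emph{expected} per-birth increment of $\sum_v N_v^k$ to obtain the clean $\Op\bigpar{n\qqw}$ scale.
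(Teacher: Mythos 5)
Your proposal is correct and follows essentially the same route as the paper: the same continuous-time pure-birth embedding with rates $k+\alpha$, the same identification of the degree sequence with the independent processes stopped at the $2m$-th birth, the same i.i.d.\ negative-binomial marginals at the deterministic time $t_n=\log(1+2m/(n\alpha))$, Chebyshev at that time, and a transfer to the stopping time using the fact that only $\Op(n^{1/2})$ births separate the two (your $\pi_k$ transfer via counting births is exactly the paper's). The one place you genuinely diverge is the transfer of the moments $\mu_k$. You handle it with a Freedman-type martingale bound on the cumulative increment of $\sum_v N_v(t)^k$, correctly noting that a worst-case per-birth bound would cost a spurious $\log^{k-1}n$ factor; this works, but it forces you to control the conditional expected increment (which presupposes that the running empirical moments stay $O(1)$ — a point you would need to close separately, e.g.\ by monotonicity) and to handle a random number of martingale steps. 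The paper avoids all of this with a one-line observation: $M_k(t)=\sum_v N_v(t)^k$ is nondecreasing in $t$, so on the \whp{} event $t_m^-<\tau_{2m}<t_m^+$ (with $t_m^\pm$ two deterministic times bracketing $t_n$ at distance $\Theta(\sqrt{\omega/n})$) it is sandwiched between $M_k(t_m^-)$ and $M_k(t_m^+)$, and both endpoints concentrate by Chebyshev around values within $O(\sqrt{\omega n})$ of $n\,\E Y_n^k$. That monotone-sandwich step is worth adopting, since it eliminates the only delicate part of your argument.
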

\begin{remarkttt}
  The estimates \eqref{l3a}--\eqref{l3b} with an arbitrary $\go(n)\to\infty$
can equivalently be written as
$\pi_k(\dn) = \P(Y_n=k)+\Op\bigpar{n\qqw}$
and
$\mux_k(\dn) = \E Y_n^k +\Op\bigpar{n\qqw}$,
see \eg{}  \cite[Lemma~3]{SJN6}.
\end{remarkttt}
\begin{remarkttt}\label{rem:degree}%
  We have
  $\E Y_n=2m/n=\sum_{v \in [n]} d_v/n$, $\E Y_n(Y_n-1) = (2m/n)^2 (1+\alpha^{-1})$, and $\E Y_n(Y_n-1)(Y_n-2) = (2m/n)^3 (1+\alpha^{-1})(1+2\alpha^{-1})$; see \refL{cl:NB}. 
\end{remarkttt}
\noindent
The proof strategy hinges on the fact that, in~$G^{\alpha,*}_{n,m}$,  
the degree of a vertex equals the number of its concurrences 
in the auxiliary sequence~$w_1, \ldots, w_{2m}$ introduced above. 
To study these statistics we then switch to continuous time, 
and identify the degree of each vertex~$v \in [n]$ 
with a suitable independent birth process
(with initial value~$0$ and birth rates~$\lambda_k:=k+\alpha$). 
 In \refS{sec:degree}, we show that if we sequentially record 
the vertices which give birth, then the resulting vertex sequence~$w_1, \ldots, w_{2m}$ indeed has the correct distribution. 
This continuous-time embedding ensures 
(i)~that the birth processes and thus the degrees evolve independently, 
and (ii)~that each birth process has an explicit distribution at time~$t$, 
which turns out to be of negative binomial form (see~\eqref{eq:Dvt} in \refS{sec:degree}). 
These two
properties make it easy to approximate the 
degree sequence up to the desired precision. 
See \refS{sec:degree} for the full~details.

\subsection{Reduction to configuration model~$\Gxdn$: giant component heuristics}\label{sec:heuristic} 
The punchline of the auxiliary results above
is that we can obtain results for~$G^{\alpha}_{n,m}$ 
by applying standard results for the well-understood configuration
model~$\Gxdn$, where $\dn$ is random but approximates a negative binomial
distribution.
Armed with this reduction to the configuration model~$\Gxdn$, 
the plan is to then estimate the size of the largest component 
by applying the following result of Janson and Luczak
\cite[a special case of Theorems~2.3--2.4]{JL2009}, 
which is a convenient extension of the pioneering result by
Molloy and Reed~\cite{MR1998}. 
(The moment condition~$\mux_5(\dn)=O(1)$ 
can be weakened to lower moments, see~\cite{JL2009,SJ313}, but we
do not need~this.)
\begin{theorem}[Phase transition in~$\Gxdn$,~\cite{JL2009}]\label{thm:giant:conf}%
Suppose that, for each~$n \ge n_0$, $\dn=(d_v)_{v \in [n]}$ is a 
sequence of non-negative integers with~$\mux_5(\dn)=O(1)$ 
such that~$\sum_{v \in [n]} d_v$ is even. 
Furthermore, suppose that~$D \in \NN$ is a random variable that is 
independent of~$n$ such that~$\E D \in (0,\infty)$, $\Pr(D=1)>0$, 
and~$\pi_k(\dn) \to\Pr(D=k)$ 
as~$n \to \infty$, for every~$k \ge 0$.
Then, writing~$L_1=L_1(\Gxdn)$ and~$L_2=L_2(\Gxdn)$ for the sizes of 
the largest and second largest component of~$\Gxdn$, the following~holds. 
\begin{romenumerate}
\parskip 0em  \partopsep=0pt \parsep 0em 
\item\label{thm:supercr}
If~$\E D(D-2) >0$, then there is a unique~$\xi \in (0,1)$ satisfying~$\E D \xi^D = \xi^2 \E D$, and, furthermore,   
\begin{equation}\label{eq:supercr}
L_1/n \pto 1-\E \xi^D > 0 \quad \text{ and } \quad L_2/n \pto 0 .
\end{equation}
\item\label{thm:critical}
If~$\E D(D-2) =0$ and~$\muu_n := \sum_{v \in [n]}d_v(d_v-2)$ satisfies~$\muu_n > 0$ and~$n^{-2/3} \muu_n \to \infty$ as~$n \to \infty$, then 
\begin{equation}\label{eq:critical}
L_1 = \Bigpar{\frac{2 \E D}{\E D(D-1)(D-2)}+o_p(1)} \muu_n  \quad \text{ and } \quad L_2 = o_p(\muu_n) .
\end{equation}
\item\label{thm:subcr}
If~$\E D(D-2) \le 0$, then~$L_1/n \pto 0$.  
\end{romenumerate}
\end{theorem}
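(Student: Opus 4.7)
My plan is to prove this by exploring components of~$\Gxdn$ one at a time, following the classical Molloy--Reed / Janson--Luczak strategy. Build~$\Gxdn$ by the standard half-edge pairing, and explore the component of a vertex~$v$ as follows: mark~$v$'s half-edges as \emph{active}; repeatedly pick an active half-edge and pair it with a uniformly chosen unmatched half-edge; if the partner belongs to a previously unseen vertex~$u$, add~$u$'s remaining half-edges to the active set. It is technically convenient to attach to each half-edge an independent $\mathrm{Exp}(1)$ clock and process half-edges in clock order; then $S(t):=\#\{\text{active half-edges at time }t\}$ together with $N_k(t):=\#\{\text{unexplored vertices of degree~}k\}$ forms a tractable Markov chain driven only by the current configuration.

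First I would establish the fluid limit via the differential equation method: a martingale computation shows that, uniformly on compact $t$-intervals, $N_k(t)/n$ and $S(t)/n$ converge in probability to deterministic functions $n_k(t)$ and $s(t)$ governed by an explicit ODE. Under $\E D(D-2)>0$ one has $s(0)=0$ with positive slope, and $s$ first returns to zero at a unique time~$t_1>0$ satisfying $e^{-t_1}=\xi$, where $\xi\in(0,1)$ is the root of $\E D\xi^D=\xi^2\E D$; the corresponding excursion reveals a single giant component whose asymptotic density equals $1-\sum_k \P(D=k)\xi^k = 1 - \E \xi^D$, yielding~\eqref{eq:supercr}. The identity for~$\xi$ has a clean branching-process interpretation: locally the exploration looks like a Galton--Watson tree whose root has $D$ children and whose other vertices have offspring distribution $D^\ast-1$ with $\P(D^\ast=k)=k\P(D=k)/\E D$; the half-edge extinction probability solves $\xi=\E\xi^{D^\ast-1}$, which is equivalent to $\E D\xi^D=\xi^2\E D$, and the root survives with probability $1-\E\xi^D$.

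For the subcritical case~\ref{thm:subcr}, $s(t)\le 0$ for all $t>0$, so the fluid argument alone forces every component to be~$\op(n)$. For the critical window~\ref{thm:critical} the fluid approximation degenerates and one passes to a diffusion limit: rescaling $S$ by $n\qqq$ and invoking an excursion-length analysis in the spirit of Aldous produces both the $\Theta(\muu_n)$ scale for $L_1$ and the constant $2\E D/\E D(D-1)(D-2)$ in~\eqref{eq:critical}.

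The principal obstacle is obtaining sharp control of the martingale error in the fluid approximation, both to identify the leading-order constant $1-\E\xi^D$ and to conclude $L_2/n\pto 0$. This requires Doob-type variance bounds on the jumps of $(S(t),N_k(t))$, where a jump in $S$ equals (up to sign) the degree of a newly discovered vertex sampled with probability proportional to degree; controlling fourth moments of such a size-biased degree therefore demands the hypothesis $\mux_5(\dn)=O(1)$. Uniqueness of the giant then follows from a sprinkling / duality step: after removing the giant, the residual degree sequence is subcritical in the sense of part~\ref{thm:subcr}, so its components are~$\op(n)$, giving the second statement in~\eqref{eq:supercr}.
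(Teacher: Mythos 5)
This theorem is not proved in the paper at all: it is quoted verbatim (as a special case of Theorems~2.3--2.4 of Janson--Luczak~\cite{JL2009}), so the only meaningful comparison is with the cited proof. For parts~\ref{thm:supercr} and~\ref{thm:subcr} your sketch is essentially that proof's route: explore components through the half-edge pairing, pass to a fluid limit, and read off the giant from the positive excursion of the active-half-edge count, with $e^{-t_1}=\xi$ solving the size-biased extinction equation $\xi=\E\xi^{D^*-1}$, equivalently $\E D\xi^D=\xi^2\E D$, and density $1-\E\xi^D$. (Janson--Luczak implement this with a continuous-time death process on half-edges and a direct Glivenko--Cantelli-type concentration argument rather than Wormald's differential equation method, and they obtain $L_2=\op(n)$ by showing the set of times with many active half-edges is whp a single interval, rather than by sprinkling; these are implementation differences, not conceptual ones.)

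There is, however, a genuine gap in your treatment of part~\ref{thm:critical}. The hypothesis $n^{-2/3}\muu_n\to\infty$ places the degree sequence strictly \emph{above} the critical window: the Aldous-type diffusion rescaling and Brownian excursion-length analysis you invoke are the correct tool precisely when $\muu_n=O(n^{2/3})$, and there they produce a \emph{non-degenerate random} limit for $L_1/n^{2/3}$ — which is incompatible with the concentration statement $L_1=\bigpar{\tfrac{2\E D}{\E D(D-1)(D-2)}+\op(1)}\muu_n$ that \eqref{eq:critical} asserts (note $L_1\gg n^{2/3}$ here). What is actually needed is a second-order analysis of the drift of the exploration walk: when $\E D(D-2)=0$ the limiting drift function has a degenerate zero at the origin, and the finite-$n$ perturbation $\muu_n/n>0$ tilts it so that the positive excursion has length $\sim\tfrac{2\E D}{\E D(D-1)(D-2)}\muu_n$, with fluctuations of order $n^{1/2}\ll\muu_n$ controlled by the assumption $n^{-2/3}\muu_n\to\infty$. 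This law-of-large-numbers-type argument (not a functional CLT) is exactly what \cite[Theorem~2.4]{JL2009} carries out, with refinements in \cite{SJ313}. As written, your plan for \ref{thm:critical} would not yield \eqref{eq:critical}.
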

%

In the following heuristic discussion we shall make our giant component 
results for $G^{\alpha}_{n,m}$ plausible (with a focus on \refT{thm:main1}). 
To this end, as discussed above, it suffices to study the giant component of~$\Gxdn$ 
for `typical' degree sequences~$\dn$, where
we shall below (for simplicity) assume that we can 
apply \refT{thm:giant:conf} with the random variable~$D$ being approximately 
equal to~$Y=Y_n \sim \NB(\alpha,p)$ from Theorem~\ref{thm:degree}. 
Regarding the point \emph{when the giant component emerges}, 
in the usual informal language \refT{thm:giant:conf} 
(together with Remark~\ref{rem:degree}) 
states that there is a giant component if and only if 
\begin{equation}\label{eq:mc:location}
\E D (D-2) \approx \E Y(Y-2) =\E Y (Y-1) - \E Y =  \frac{2m}{n}\biggpar{\frac{2m}{n}\Bigpar{1+\alpha^{-1}}-1}
\end{equation}
is larger than zero, which makes the phase transition location~$\mc \approx n/[2(1+\alpha^{-1})]$ 
plausible (see \refApp{sec:mc:heuristic} for an alternative heuristic). 
%
Regarding the \emph{size of the largest component} for~$m=\mc(1+\eps)$ with~$\eps=\eps(n) \to 0$, 
in view of~\eqref{eq:mc:location} we have~$\E D (D-2) \approx \E Y(Y-2) = \Theta(\eps) \to 0$. 
In this case \refT{thm:giant:conf}\ref{thm:critical} intuitively predicts 
(together with~\eqref{l3b} and \refR{rem:degree}) 
that the size of the largest component is approximately  
\begin{equation*}
\frac{2 \E D}{\E D (D-1) (D-2)} \cdot \sum_{v \in [n]} d_v(d_v-2) \approx \frac{2 \E Y\, \E Y(Y-2) \, n}{\E Y (Y-1)(Y-2)} 
\approx \frac{2\eps n}{1+2\alpha^{-1}} ,
\end{equation*}
which makes~$L_1\bigpar{\mc(1+\eps)} \approx \rho_\alpha(\eps) n$ with~$\rho_\alpha(\eps) \approx 2\eps/(1+2\alpha^{-1})$ as~$\eps \searrow 0$ plausible.  
In fact, our above application of \refT{thm:giant:conf}\ref{thm:critical} tacitly required that the parameter
\[
n^{-2/3}\cdot \sum_{v \in [n]} d_v(d_v-2) \approx n^{1/3} \cdot \E Y(Y-2) = \Theta(n^{1/3}\eps)
\]
tends to infinity, which makes the \emph{assumption~$\eps^3n \to \infty$} and thus \refT{thm:main1} plausible. 
See \refS{sec:L1} for a rigorous version of the above heuristic arguments (in the more general setting of \refT{thm:main}).

\section{Proofs of auxiliary results}\label{sec:approx:proofs}
In this section we prove the three basic auxiliary results stated in 
Sections~\ref{sec:approx1}--\ref{sec:approx3}.
As noted above, the results have partly been shown earlier, but for
completeness we give complete proofs  of the versions used here.
We consider \refT{thm:transfer} last, since we find it
convenient to use \refT{thm:degree} in the proof.

\subsection{Proof of Theorem~\ref{thm:equivalence}: conditional equivalence of~$G^{\alpha,*}_{n,m}$ and~$\Gxdn$}\label{sec:equivalence}
The conditional equivalence result of Theorem~\ref{thm:equivalence} 
can be shown in several ways, including enumeration~\cite{Pittel} 
and exchangeability~\cite{RS2012,Jan2018} approaches. 
Inspired by P\'olya urn arguments, here we shall use an 
elementary approach that avoids cumbersome explicit calculations 
by defining appropriate random variables
(which also facilitates the upcoming degree sequence arguments). 
To this end we introduce a random~sequence 
\begin{equation}\label{def:Wj}
(W_j)_{j \ge 1}
\end{equation}
of vertices from~$[n]$, 
by defining~$W_{i+1}$ to have the conditional 
probability distribution with 
\begin{equation}\label{eq:def:Wi}
\Pr\bigpar{W_{i+1}=v \: \big| \: W_1, \ldots, W_i} = \frac{\sum_{j \in [i]}\indic{W_j=w}+\alpha}{i+\alpha n} \qquad \text{for all~$v \in [n]$}.
\end{equation}
(This can be interpreted as the sequence of draws from a P\'olya urn with
$n$ colours, with initially $\ga$ balls of each colour, see \eg{}
\cite{Markov1917,Polya1930,JohnsonKotz}.)
By joining these vertices pairwise to edges, for each~$i \ge 0$ we then obtain 
a multigraph with vertex set~$[n]$ and 
edge multiset~$E_{n,i} := \{W_1W_2, \ldots, W_{2i-1}W_{2i}\}$, where~$E_{n,0}=\emptyset$. 
The resulting multigraph sequence is easily seen to have the same distribution as~$(G^{\alpha,*}_{n,i})_{i \ge 0}$. 
Indeed, since~$\sum_{j \in [2i]}\indic{W_j=v}=d_v(i)$ equals the degree of vertex~$v$ after~$i$ steps, 
this follows from the simple observation that the stepwise conditional distributions of the added edges are the same 
(the conditional probability of adding~$\{v,w\}$ as the next edge is 
proportional to~$2 \cdot (d_v(i)+\alpha)(d_w(i)+\alpha)$ when~$v \neq w$, 
and proportional to~$(d_v(i)+\alpha)(d_v(i)+1+\alpha)$ when~$v = w$). 
Now the proof of the desired conditional equivalence result is straightforward, 
since the above `pairwise joining of~$2m$~vertices' construction of the edge multiset 
is similar to the construction of the configuration model~$\Gxdn$ described in~Section~\ref{sec:approx2}.
\begin{proof}[Proof of Theorem~\ref{thm:equivalence}]
Let~$\dn=(d_v)_{v \in [n]}$ be a degree sequence with~$\sum_{v \in [n]}d_v=2m$. 
Recalling that~$\cS_\dn$ denotes the set of all $2m$-element sequences~$\wn = (w_1, \ldots, w_{2m})$ 
in which each vertex~$v \in [n]$ appears~$d_v$ times, below we shall 
write~$E(\wn):=\{w_1w_2, \ldots, w_{2m-1}w_{2m}\}$ for the associated edge multiset. 
For any multigraph~$G$ with degree sequence~$d(G)=\dn$, 
by the above-discussed construction of~$G^{\alpha,*}_{n,m}$ it follows that 
\[
\Pr\bigpar{G^{\alpha,*}_{n,m}=G \; \big| \; d(G^{\alpha,*}_{n,m})=\dn} = \frac{\Pr(G^{\alpha,*}_{n,m}=G)}{\Pr(d(G^{\alpha,*}_{n,m})=\dn)} \\
 = \frac{\sum_{\wn \in \cS_{\dn}: E(\wn)=E(G)} \Pr\bigpar{(W_1, \ldots, W_{2m}) = \wn}}{\sum_{\wn \in \cS_{\dn}} \Pr\bigpar{(W_1, \ldots, W_{2m}) = \wn }} .
\]
By multiplying the conditional probabilities from~\eqref{eq:def:Wi}
(and rearranging the factors in the numerator),
it now is straightforward to see that the above probabilities  
\begin{equation}\label{eq:excha}
\Pr\bigpar{(W_1, \ldots, W_{2m}) = \wn} = \frac{\prod_{v \in [n]} \prod_{0 \le j < d_v}(j+\alpha)}{\prod_{0 \le i < 2m}(i+\alpha n)}
\end{equation}
are the same for all~$\wn \in \cS_{\dn}$ 
(as they depend on the degree sequence~$\dn=(d_v)_{v \in [n]}$ only). 
Since the edge multiset of~$\Gxdn$ is defined as~$E(\wn)$ for 
a uniform random~$\wn \in \cS_{\dn}$ (see Section~\ref{sec:approx2}),  
it readily follows that 
\begin{equation}
\begin{split}
\Pr\bigpar{G^{\alpha,*}_{n,m}=G \; \big| \; d(G^{\alpha,*}_{n,m})=\dn} 
= \sum_{\wn \in \cS_{\dn}}\frac{\indic{E(\wn)=E(G)}}{|\cS_{\dn}|} 
= \Pr(\Gxdn=G) ,
\end{split}
 \end{equation}
which implies the claimed conditional equivalence (since~$G$ with degree sequence~$d(G)=\dn$ was~arbitrary).  
\end{proof}

\subsection{Proof of Theorem~\ref{thm:degree}: negative binomial degree sequence of~$G^{\alpha,*}_{n,m}$}\label{sec:degree}
For the degree sequence result of Theorem~\ref{thm:degree} it 
will be convenient to consider a continuous time embedding of 
the~$(W_j)_{j \ge 1}$ based construction of~$(G^{\alpha,*}_{n,i})_{i \ge 0}$,  
since this will give us more independence. 
(This is a special case of a general embedding for P\'olya urns, see \eg{}
\cite[Section~9.2]{AN1972} with extension in
\cite[Remark~4.2]{SJ154} and
\cite[Remark~1.11]{SJ169}.)
To this end, let
\begin{equation}\label{def:Dvt}
\Bigpar{\Bigpar{D_v(t)}_{t \in [0,\infty)}}_{v \in [n]}
\end{equation}
be independent pure birth processes 
with initial value~$0$ and birth rates~$\lambda_k := k+\alpha$ 
(i.e., the transition rate from state~$k$ to state~$k+1$).
Identifying vertex~$v \in [n]$ with the birth process~$D_v(t)$, 
the order of the random birth-times~$(\tau_{j})_{j \ge 1}$ 
(also defining~$\tau_0:=0$ for convenience)  
thus naturally induces a sequence of random vertices~$(W_j)_{j \ge 1}$ 
(the ones which gave birth at the corresponding times). 
Justifying our slight abuse of notation, it is not difficult to check that this vertex sequence 
has the same distribution as the sequence~$(W_j)_{j \ge 1}$ defined in Section~\ref{sec:equivalence}. 
Indeed, since vertex~$v$ occurs~$\sum_{j \in
  [i]}\indic{W_j=v}=D_v(\tau_{i})$ many times in~$(W_1, \ldots, W_i)$,
this follows from the simple observation that the stepwise conditional distributions 
of the selected vertices are the same 
(the conditional probability of selecting~$v$ as the next vertex is~$(D_v(\tau_{i})+\alpha)/(i+\alpha n)$, 
since the total rate equals~$\sum_{w \in [n]}(D_w(\tau_{i})+\alpha)=i+\alpha n$). 
For later reference we also record the standard fact
\unskip\footnote{By standard textbook results (see, e.g.,
  \cite[Section~6.8]{GS2001}) the functions~$p_k(t):=\Pr(D_v(t)=k)$
  with~$p_k(t)=\indic{k=0}$ are the unique solutions of the forward
  equations~$p'_k(t)=\lambda_{k-1}p_{k-1}(t)\indic{k \ge 1}-\lambda_k
  p_k(t)$.  
For~$\lambda_k=k+\alpha$ the solution of these differential equations 
turns out to be~$p_k(t)=\Pr(Y=k)$ with~$Y \sim \NB(\alpha,1-e^{-t})$ 
as defined in~\eqref{eq:NB}. 
This can alternatively be deduced from~\cite[Exercise~6.8.6]{GS2001}, and is
also explicitly stated in~\cite[(3.15)]{T89}, for example.}
that~$D_v(t)$ has a negative binomial distribution 
with shape parameter~$\alpha$ and probability~$1-e^{-t}$, i.e., 
\begin{equation}\label{eq:Dvt}
D_v(t) \sim \NB(\alpha,1-e^{-t}).
\end{equation}
Exploiting independence of the birth-processes, 
now the proof of the desired degree sequence result is conceptually straightforward, 
since in our vertex-based construction of~$G^{\alpha,*}_{n,m}$ the degree of vertex~$v$~equals
\begin{equation}\label{eq:Dwm:deg}
d_v(m)=\sum_{j \in [2m]}\indic{W_j=v}=D_v(\tau_{2m}) ,
\end{equation}
where~$\tau_{2m}$ will be highly concentrated. 
Before giving the details, we record some basic
properties of negative binomial random variables 
(deferring their proofs, which are rather tangential to the main~argument
here). 
For~$x\in\bbR$ and integer~$k\ge 0$,
we denote the falling factorial by
$\fall{x}{k}:=\prod_{0\le j<k}(x-j)$, where~$\fall{x}{0}=1$.
\begin{lemma}\label{cl:NB}
For~$Y \sim \NB(\alpha,p)$ the following holds for all~$\alpha \in
(0,\infty)$ and~$p \in [0,1)$.  
\begin{romenumerate}
\parskip 0em  \partopsep=0pt \parsep 0em 
	\item\label{cl:NB:mom}%
	We have, for every integer~$k \ge 0$,
      \begin{align}
        \label{nb:mom:mom1}
        \E \fall{Y}{k} = (\E Y)^k \cdot \prod_{1 \le j < k}(1+j/\alpha) 
				\quad \text{ with } \quad
				\E Y = \frac{\ga p}{1-p} .
      \end{align}
\item \label{cl:NB:pgf}
The probability generating function
and its derivative are given by, for~$|x|<1/p$,
\begin{align}\label{nb:pgf:pgf'}
  \E x^Y = \Bigpar{\frac{1-p}{1-px}}^\ga 
\quad \text{ and } \quad 
  \E [Y x^{Y-1}] = \ga p \frac{(1-p)^\ga}{(1-px)^{\ga+1}}.
\end{align}
	\item\label{cl:NB:mean}%
 If~$p=1-e^{-t}$, then~$t=\log(1+x/\alpha)$
 implies
 $p=1-1/(1+x/\alpha)=x/(x+\ga)$
 and $\E Y = x$.
\end{romenumerate}
\end{lemma}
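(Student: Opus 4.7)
The plan is direct computation from the defining probability mass function~\eqref{eq:NB}, with each part feeding into the next.

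First, for part~\ref{cl:NB:pgf}, I would compute the probability generating function by summing the series
\begin{equation*}
\E x^Y = (1-p)^\ga \sum_{r\ge 0} \binom{\ga+r-1}{r}(px)^r
\end{equation*}
and invoking the generalized binomial series $(1-y)^{-\ga}=\sum_{r\ge0}\binom{\ga+r-1}{r}y^r$, valid for $|y|<1$, which covers $|px|<1$, i.e., $|x|<1/p$. This immediately gives $\E x^Y = ((1-p)/(1-px))^\ga$. Differentiation in~$x$ then yields the stated formula for $\E[Yx^{Y-1}]$.

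Next, for part~\ref{cl:NB:mom}, I would exploit the standard identity $\E \fall{Y}{k} = \frac{d^k}{dx^k} \E x^Y\big|_{x=1}$, which is justified since the radius of convergence~$1/p>1$. Differentiating $k$ times,
\begin{equation*}
\frac{d^k}{dx^k}(1-px)^{-\ga} = p^k \prod_{0\le j<k}(\ga+j)\cdot(1-px)^{-\ga-k},
\end{equation*}
so at $x=1$ we get $\E \fall{Y}{k} = \bigpar{p/(1-p)}^k \prod_{0\le j<k}(\ga+j)$. Specializing to $k=1$ identifies $\E Y = \ga p/(1-p)$. Factoring out $\ga$ from each term, $\prod_{0\le j<k}(\ga+j) = \ga^k \prod_{1\le j<k}(1+j/\ga)$, which rearranges to the claimed $(\E Y)^k \prod_{1\le j<k}(1+j/\ga)$.

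Finally, for part~\ref{cl:NB:mean}, I would simply plug in: if $t = \log(1+x/\ga)$, then $e^{-t}=\ga/(\ga+x)$, so $p = 1-e^{-t} = x/(\ga+x)$ and $1-p = \ga/(\ga+x)$. Substituting into the expression for $\E Y$ from part~\ref{cl:NB:mom} gives $\E Y = \ga \cdot \bigpar{x/(\ga+x)} \cdot \bigpar{(\ga+x)/\ga} = x$. There is no real obstacle here — the whole lemma is a standard computation, and the only care needed is the convergence radius of the generating function (which comfortably contains $x=1$ since $p<1$) to justify differentiating termwise and evaluating at $x=1$.
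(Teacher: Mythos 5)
Your proposal is correct and follows essentially the same route as the paper: generalized binomial series for the PGF, differentiation for the second identity and for the factorial moments (the paper phrases the $k$-fold differentiation in terms of $p$ rather than $x$, but this is the same computation), and direct substitution for part~(iii).
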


\begin{proof}[Proof of \refT{thm:degree}]
By the  construction of~$G^{\alpha,*}_{n,m}$  discussed above,
we have, see \eqref{eq:Dwm:deg},
\begin{equation}\label{eq:thm:degree:pr}
\dn=d(G^{\alpha,*}_{n,m}) = \bigpar{D_v(\tau_{2m})}_{v \in [n]}.
\end{equation}
With an eye on~\eqref{dxp:mux}, we now introduce the auxiliary variables
\begin{align}
\label{Nk:Mq}
N_k(t) := \sum_{v \in [n]}\indic{D_v(t)=k}
\quad \text{ and } \quad 
M_k(t) &:= \sum_{v \in [n]}D_v(t)^k . 
\end{align}
We henceforth write~$\go=\go(n)$, and may (as usual) assume 
that~$\sqrt{\go n}\le 2m$ holds. 
With the goal of approximating~$\tau_{2m}$ by a \emph{deterministic}
auxiliary time~$t_m$, define
\begin{align}
\label{eq:tm:tpmm}
t_m := \log\Bigpar{1+\frac{2m}{\alpha n}} 
\quad \text{ and } \quad 
t^\pm_m := \log\Bigpar{1+\frac{2m\pm\sqrt{\go n}}{\alpha n}} .
\end{align}
These times are chosen such that, by \eqref{eq:Dvt} and \refL{cl:NB}\ref{cl:NB:mean}, we have 
\begin{align}
  D_v(t_m)&\eqd Y_n \sim \NB(\ga,p_n), \label{eddax} \\
  \label{eddapm}
  \E D_v(t^\pm_m)&=\frac{2m\pm\sqrt{\go n}}{ n} = \E D_v(t_m) \pm \sqrt{\go/n}.
\end{align}
Furthermore, since we assume~$\sqrt{\go n}\le 2m$ and~$m=O(n)$,
all terms on the right-hand side of~\eqref{eddapm} are~$O(1)$, 
so~\eqref{nb:mom:mom1} and~$\ga=\Omega(1)$ yield, for every fixed integer~$q \ge 0$, 
\begin{align}\label{snorre}
  \max_{t \in \{t_m^-,t_m,t_m^+\}}\E\fall{ D_v(t)}{q} =O(1) .
\end{align}
For every~$k \ge 1$ it is well-known that we may write~$x^k$ as a linear combination
\begin{align}
  \label{stirling}
  x^k=\sum_{1 \le q \le k}\genfrac\{\}{0pt}{}{k}{q}\fall{x}q ,
\end{align}
where the coefficients are the so-called Stirling numbers of the second kind. 
Hence~\eqref{snorre} also holds with~$\fall{ D_v(t)}{q}$ replaced by~$D_v(t)^{q}$, so that~$\E Y_n^q=O(1)$ by~\eqref{eddax}.  
Using independence of the birth processes, we also infer 
\begin{align}\label{sos}
\Var M_k(t_m^\pm) =  \sum_{v \in [n]}\Var \bigsqpar{D_v(t_m^\pm)^k} \le \sum_{v \in [n]} \E D_v(t_m^\pm)^{2k} = O(n) . 
\end{align}

We now approximate~$\tau_{2m}$. To this end we consider~$M_1(t)=\sum_{v \in [n]} D_v(t)$, where~\eqref{eddapm} yields
\begin{align}\label{jan}
  \E M_1(t_m^\pm) =n\E D_v(t_m^\pm)= 2m\pm\sqrt{\go n}.
\end{align}
Using Chebyshev's inequality and~\eqref{sos}--\eqref{jan} it follows that, \whp, $M_1(t_m^-) <2m$ and~$M_1(t_m^+) >2m$. 
Since~$M_1(\tau_{2m})=2m$ by~\eqref{eq:thm:degree:pr}--\eqref{Nk:Mq}, using time-monotonicity of~$M_1(t)$ this implies that, \whp,
\begin{align}\label{eq:tau2m}
  t_m^- < \tau_{2m} < t_m^+.
\end{align}

Next we focus on~$\mux_k(\dn)=\sum_{v \in [n]}D_v(\tau_{2m})^k/n = M_k(\tau_{2m})/n$, see~\eqref{dxp:mux} and~\eqref{eq:thm:degree:pr}--\eqref{Nk:Mq}.  
By combining~\eqref{nb:mom:mom1} and~\eqref{eddapm} with~\eqref{snorre} it follows that, for every fixed integer~$q \ge 0$, 
\begin{align*}
    \E \fall{D_v(t_m^\pm)}{q}
  = \lrpar{\frac{2m\pm\sqrt{\go n}}{2m}}^q \E \fall{D_v(t_m)}{q}
  = \E \fall{D_v(t_m)}{q} +O\bigpar{\sqrt{\go/n}}
  .
\end{align*}
Using~\eqref{stirling} again, we infer that~$\E D_v(t_m^\pm)^k = \E D_v(t_m)^k +O(\sqrt{\go/n})$. 
Consequently, using Chebyshev's inequality and~\eqref{sos} again, we deduce that, \whp,
\begin{align}\label{eq:conc:Mkt}
  M_k(t_m^\pm) = \E M_k(t_m^\pm) \pm \sqrt{\go n} 
    =n \E D_v(t_m)^k +O\bigpar{\sqrt{\go n}},
\end{align}
Since~$\E D_v(t_m)^k=\E Y_n^{k}$ by~\eqref{eddax}, 
using time-monotonicity of~$M_k(t)$ and~\eqref{eq:tau2m} it now follows that 
\begin{align}\label{noa}
  M_k(\tau_{2m}) 
      =n \E Y_n^{k} +\op\bigpar{\go  n\qq} ,
\end{align}
which in view of~$\mux_k(\dn)=M_k(\tau_{2m})/n$ establishes~\eqref{l3b}.

Finally, we turn to~$\pi_k(\dn)=\sum_{v \in [n]}\indic{D_v(\tau_{2m})=k}/n=N_k(\tau_{2m})/n$, 
see~\eqref{dxp:mux} and~\eqref{eq:thm:degree:pr}--\eqref{Nk:Mq}.  
Here the key observation is that~\eqref{eq:tau2m}, \eqref{eq:conc:Mkt}
and~\eqref{jan} together imply that, \whp,
\begin{align}\label{ham}
    |N_k(\tau_{2m})-N_k(t_m)| \le |M_1(t^+_m)-M_1(t^-_m)|
  \le |\E M_1(t^+_m)-\E M_1(t^-_m)| + 2\sqrt{\omega n} = 4 \sqrt{\omega n}
.
\end{align}
Furthermore, $\E N_k(t_m)=n\E \indic{D_v(t_m)=k}=n\Pr(Y_n=k)$ by~\eqref{eddax}, 
and~$\Var N_k(t_m) =O(n)$ by independence of the birth processes. 
Hence, using~\eqref{ham} and Chebyshev's inequality it follows that, \whp,
\begin{equation}\label{eq:Nj:bound}
  \bigabs{N_k(\tau_{2m}) - n \Pr(Y_n=k)}
  \le \bigabs{N_k(t_m) - n \Pr(Y_n=k)} +4\sqrt{\go n} \le 5 \sqrt{\omega n} , 
  \end{equation}
which in view of~$\pi_k(\dn)=N_k(\tau_{2m})/n$ establishes~\eqref{l3a}.
\end{proof}
\begin{proof}[Proof of \refL{cl:NB}]
We start with the first identity in~\eqref{nb:pgf:pgf'},
  which is an immediate consequence of \eqref{eq:NB} and
$(1-px)^{-\alpha} = \sum_{r \in \NN} \binom{\alpha+r-1}{r} p^rx^r$. 
Differentiation then yields the second identity in~\eqref{nb:pgf:pgf'}, 
which for~$x=1$ yields~$\E Y = \alpha \cdot p/(1-p)$. 
Taking higher derivatives yields, for all integers~$k \ge 1$,
\begin{equation}\label{eq:factorial}
\E \prod_{0 \le j < k}(Y-j) = \biggpar{\frac{\dx^{k}}{\dx p^{k}} (1-p)^{-\alpha}} \cdot (1-p)^{\alpha}p^k = \alpha(\alpha+1)\cdots(\alpha+k-1) \cdot \Bigpar{\frac{p}{1-p}}^k . 
\end{equation}
Inserting~$\E Y = \alpha \cdot p/(1-p)$ 
into~\eqref{eq:factorial} then readily establishes~\eqref{nb:mom:mom1}, 
where the case~$k=0$ is trivial. 
Finally, \ref{cl:NB:mean}~follows immediately 
by substituting~$p=1-1/(1+x/\alpha)$ into~$\E Y = \alpha p/(1-p)$. 
\end{proof}
\begin{remark}\label{RNegBin}
We used above that the degrees can be obtained by stopping the
independent processes~$D_v(t)$ suitably. Another, related, construction
that also can be used to show asymptotic results is that, for any~$p\in(0,1)$,
if~$D_v\sim\NB(\ga,p)$ are independent, then
the degree sequence~$(d_v)_{v \in [n]}$ has the same distribution as~$(D_v)_{v \in [n]}$
conditioned on~$\sum_v D_v=2m$, see \eg{} \citet{Holst1979} or \citet[p.~624]{Pittel}.
That the degree distribution is asymptotically negative binomial can also
be shown by direct calculations \cite[Case~2b,~p.~153]{Polya1930}.
\end{remark}

\subsection{Proof of Theorem~\ref{thm:transfer}: relating~$G^{\alpha}_{n,m}$ with~$G^{\alpha,*}_{n,m}$}\label{sec:transfer}
For the transfer statements of Theorem~\ref{thm:transfer} and
Remark~\ref{rem:transfer}, 
the strategy is to compare the stepwise conditional probabilities of the added edges
in~$(G^{\alpha}_{n,i})_{i \ge 0}$ and its multigraph variant
$(G^{\alpha,*}_{n,i})_{i \ge 0}$;
these probabilities are identical up to the normalizing factors in the
denominator.
(This is the same strategy as in~\cite[Section~3]{Pittel}, although we do
the details differently and somewhat simpler.)
We begin by relating the two processes under an extra technical condition.
\begin{lemma}\label{lem:transfer:proc}
Define~$Z(G_i) := \sum_{v \in [n]} d_v(i)^3$, where~$d_v(i):=d_v(G_i)$ denotes the degree of vertex~$v$ in~$G_i$. 
Given $A,C,\alpha_0>0$,
the following holds 
whenever~$1 \le m \le C n$ and $\alpha \ge \alpha_0$. 
For any sequence~$(G_0, \ldots, G_m)$ of simple graphs 
with $V(G_i)=[n]$, $G_{i} \subset G_{i+1}$, $e(G_i)=i$, 
and~$Z(G_{m-1}) \le A n$, we have  
\begin{equation}\label{eq:transfer:proc:tech}
\Pr\bigpar{(G^{\alpha}_{n,i})_{0 \le i \le m} = (G_0, \ldots, G_m)} = \Theta(1) \cdot \Pr\bigpar{(G^{\alpha,*}_{n,i})_{0 \le i \le m} = (G_0, \ldots, G_m)} ,
\end{equation}
where the implicit constants in~\eqref{eq:transfer:proc:tech} may depend on~$C,\alpha_0,A$. 
\end{lemma}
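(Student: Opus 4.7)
The plan is to express both sides of \eqref{eq:transfer:proc:tech} as products over $i$ of one-step transition probabilities, and then to show that the ratio of these transition probabilities equals $1+O(1/n)$ uniformly in $i$, so that the product over $m = O(n)$ steps is $\Theta(1)$. Writing $a_v := d_v(i) + \alpha$ and denoting the edge added at step $i+1$ by $\{v,w\}$ (with $v \neq w$ since the $G_j$ are simple), the conditional probability of adding $\{v,w\}$ is $a_v a_w / Z^{\mathrm{s}}_i$ in the simple process and $2 a_v a_w / Z^{*}_i$ in the multigraph process, where $Z^{\mathrm{s}}_i := \sum_{\{v',w'\} \notin E(G_i),\, v' \neq w'} a_{v'} a_{w'}$ and $Z^{*}_i$ is the multigraph normalizer. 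The numerators $a_v a_w$ cancel, so the ratio of step-$i$ transition probabilities is $Z^{*}_i/(2 Z^{\mathrm{s}}_i)$, independent of which edge was actually added.

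The key simplification is that $Z^{*}_i$ is \emph{deterministic}: expanding $(\sum_v a_v)^2$ and using that the multigraph normalizer includes all ordered pairs $v' \neq w'$ plus loop contributions $a_{v'}(a_{v'}+1)$, one computes
\begin{equation*}
Z^{*}_i = \sigma_i(\sigma_i + 1), \qquad \sigma_i := \sum_{v \in [n]} a_v = 2i + \alpha n.
\end{equation*}
For the simple normalizer, a similar expansion yields $2 Z^{\mathrm{s}}_i = \sigma_i^2 - \tau_i - R_i$, where $\tau_i := \sum_v a_v^2$ accounts for the diagonal ($v'=w'$) terms and $R_i := \sum_{\{v',w'\} \in E(G_i)} 2 a_{v'} a_{w'}$ subtracts the contribution of edges already present. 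Thus the step-$i$ ratio equals $(\sigma_i^2 + \sigma_i) / (\sigma_i^2 - \tau_i - R_i)$.

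The main technical step, and the place where the hypothesis $Z(G_{m-1}) \le An$ is used, is to show $\tau_i + R_i + \sigma_i = O(\sigma_i^2 / n)$ uniformly. Since $Z(G_i)$ is monotone in $i$, we have $\sum_v d_v(i)^3 \le An$ throughout. By the power-mean (or H\"older) inequality, $\sum_v d_v^2 \le n^{1/3}(\sum_v d_v^3)^{2/3} = O(n)$, so $\tau_i = \sum_v (d_v + \alpha)^2 = O(n) + O(\alpha \cdot m) + O(\alpha^2 n) = O((\alpha+1)^2 n)$. Similarly, using $2ab \le a^2+b^2$,
\begin{equation*}
R_i \le \sum_v d_v(i) \cdot (d_v(i)+\alpha)^2 = O\bigpar{\textstyle\sum_v d_v^3 + \alpha \sum_v d_v^2 + \alpha^2 \sum_v d_v} = O((\alpha+1)^2 n).
\end{equation*}
Since $\sigma_i = \Theta((\alpha+1)n)$ and hence $\sigma_i^2 = \Theta((\alpha+1)^2 n^2)$, it follows that $\delta_i := (\sigma_i^2 + \sigma_i)/(\sigma_i^2 - \tau_i - R_i) - 1 = (\sigma_i + \tau_i + R_i)/(\sigma_i^2 - \tau_i - R_i) = O(1/n)$, with the implicit constant depending only on $C$, $\alpha_0$, $A$.

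Finally, since the numerators and the cancelled $2a_v a_w$ factors match at every step, we get
\begin{equation*}
\frac{\Pr\bigpar{(G^{\alpha}_{n,i})_{0 \le i \le m} = (G_0, \ldots, G_m)}}{\Pr\bigpar{(G^{\alpha,*}_{n,i})_{0 \le i \le m} = (G_0, \ldots, G_m)}} = \prod_{i=0}^{m-1}(1 + \delta_i),
\end{equation*}
and the bound $\delta_i = O(1/n)$ combined with $m \le Cn$ and $1 + \delta_i \le e^{\delta_i}$ gives $\prod_i (1+\delta_i) \le \exp(O(1)) = \Theta(1)$, while each $\delta_i \ge 0$ gives the matching lower bound $\prod_i(1+\delta_i) \ge 1$. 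The main obstacle is the bound on $\tau_i, R_i$; the third-moment assumption $Z(G_{m-1}) \le An$ is exactly what is needed to push through the H\"older estimate, and the upper bound on $R_i$ is slightly more delicate because it involves a sum over edges weighted by vertex degrees.
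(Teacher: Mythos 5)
Your proposal is correct and follows essentially the same route as the paper: compare the one-step conditional probabilities, observe that the numerators cancel so only the normalizers $(\sigma_i^2+\sigma_i)$ versus $\sigma_i^2-Q(G_i)$ matter, bound $Q(G_i)=\tau_i+R_i=O((\alpha+1)^2n)=O(\sigma_i^2/n)$ via the monotone third-moment hypothesis, and multiply $m\le Cn$ factors of $1+O(1/n)$. The only cosmetic difference is that you control $\sum_v d_v^2$ by H\"older while the paper uses the elementary bound $(x+\alpha)(y+\alpha)\le 4\max\{x^2,y^2,\alpha^2\}$; both yield the same estimate.
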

\begin{proof}
Let~$\{v,w\} \in \binom{[n]}{2} \setminus E(G_i)$ denote the (unique) edge in which~$G_{i+1}$ and~$G_i$ differ. 
Note that, by construction, the edge~$\{v,w\}$ is added to~$G_i$ with probability proportional 
to~$2(d_v(i)+\alpha)(d_w(i)+\alpha)$ in both random graph processes, but each time the 
normalizing factor in the denominator differs slightly. 
Indeed, recalling~$\sum_{v \in [n]} d_v(i) =2i$, in the random 
multigraph process 
the normalizing factor equals
\begin{equation}\label{eq:norm:multi}
\begin{split}
& \sum_{\{x,y\} \in \binom{[n]}{2}} 2\bigpar{d_x(i)+\alpha} \bigpar{d_y(i)+\alpha} + \sum_{x \in [n]} \bigpar{d_x(i)+\alpha} \bigpar{d_y(i)+1 + \alpha} \\
& \qquad \qquad = \Bigpar{\sum_{x \in [n]} \bigpar{d_x(i)+\alpha}}^2 + \sum_{x \in [n]} \bigpar{d_x(i)+\alpha} = (2i+\alpha n)(2i + \alpha n + 1) ,
\end{split}
\end{equation}
whereas in the original (simple) random graph process the normalizing factor equals 
\begin{align}\label{eq:norm:simple}
&\sum_{\{x,y\} \in \binom{[n]}{2} \setminus E(G_i)} 2\bigpar{d_x(i)+\alpha}
\bigpar{d_y(i)+\alpha}
  \notag\\&\hskip4em
  = (2i + \alpha n)^2
- \biggsqpar{\underbrace{2 \sum_{\{x,y\} \in E(G_i)} \bigpar{d_x(i)+\alpha}
  \bigpar{d_y(i)+\alpha}
+  \sum_{x \in [n]} \bigpar{d_x(i)+\alpha}^2}_{=: Q(G_i)}} .
\end{align}
Putting things together, it follows that the one-step conditional probabilities 
are given by 
\begin{align}
\label{eq:Gi}
\Pr\Bigpar{ G^{\alpha}_{n,i+1}=G_{i+1}  \; \Big| \; (G^{\alpha}_{n,j})_{0 \le j \le i} = (G_0, \ldots, G_i)} & = \frac{2\bigpar{d_v(i)+\alpha}\bigpar{d_w(i)+\alpha}}{(2i + \alpha n)^2 - Q(G_i)} , \\
\label{eq:Hi}
\Pr\Bigpar{G^{\alpha,*}_{n,i+1}=G_{i+1} \; \Big| \; (G^{\alpha,*}_{n,j})_{0 \le j \le i} = (G_0, \ldots, G_i)} & = \frac{2\bigpar{d_v(i)+\alpha}\bigpar{d_w(i)+\alpha}}{(2i + \alpha n)(2i + \alpha n + 1)} .
\end{align}

Next we claim that~\eqref{eq:transfer:proc:tech} follows 
if~$\max_{0 \le i < m}Q(G_i) \le B \alpha^2 n$ for some constant~$B=B(C,\alpha_0,A)>0$. 
Indeed, the right-hand sides of~\eqref{eq:Gi}--\eqref{eq:Hi} 
are then equal up to a multiplicative factor of $1+O(n^{-1})$, 
where the implicit constants may depend on~$B$ and~$\alpha_0$.   
Noting that initially~$\Pr(G^{\alpha}_{n,0}=G_{0})=1=\Pr(G^{\alpha,*}_{n,0}=G_{0})$, 
then~\eqref{eq:transfer:proc:tech} follows readily by comparing the product 
of~$m \le Cn$ conditional probabilities.

It remains to prove~$\max_{0 \le i < m}Q(G_i) \le B \alpha^2 n$.
Using~$(x+\alpha)(y+\alpha) \le 4 \max\{x^2,y^2,\alpha^2\}$  
it follows that
\begin{equation}
\begin{split}
Q(G_i) 
& \le 8\sum_{\{x,y\} \in E(G_i)}\bigl(d_x(i)^2 + d_y(i)^2+\alpha^2\bigr) + 4\sum_{x \in [n]}\bigl(d_x(i)^2 +\alpha^2\bigr) \\
& \le 16 \sum_{v \in [n]} d_v(i)^3 + 8 \alpha^2 |E(G_i)|  + 4 \sum_{v \in [n]} d_v(i)^2 + 4 \alpha^2 n \le 20 \cdot \Bigsqpar{ Z(G_i) + \alpha^2 \max\{i,n\}} .
\end{split}
\end{equation}
Noting that~$Z(G_i) \le Z(G_{m-1})$ by monotonicity of the degrees, 
and using the assumptions $Z(G_{m-1})\le An$,
$i < m \le Cn$ and~$\alpha \ge \alpha_0$ 
we readily infer~$Q(G_i)\le 20 [A + \alpha^2 (C+1)] n \le B \alpha^2 n$ 
for suitable~$B=B(C,\alpha_0,A)>0$, completing the proof (as discussed). 
\end{proof}
\noindent 
To deduce the desired transfer statements, it intuitively remains to  
show that typically~$Z(G^{\alpha}_{n,m-1}) = O(n)$. 
Perhaps surprisingly, 
using Lemma~\ref{lem:transfer:proc} 
this can in fact be derived (or `bootstrapped')
from a corresponding bound for the more 
tractable process~$G^{\alpha,*}_{n,m}$ by a stopping time argument. 
\begin{proof}[Proof of Theorem~\ref{thm:transfer}]
Since~\eqref{eq:transfer:proc} implies~\eqref{eq:transfer}, 
it suffices to prove Remark~\ref{rem:transfer}. 
Recalling~\eqref{dxp:mux} and~\eqref{l3b} with~$\E Y_n^5=O(1)$, 
by Theorem~\ref{thm:degree}
(and monotonicity in $m$)
we may choose~$A = A(C,\alpha_0)> 0$
such that
\begin{align}\label{avesta}
\Pr(Z(\gganx{m-1})> A n) \le \Pr\bigpar{Z\bigpar{\gganx{\floor{Cn}}} > An } =o(1) . 
\end{align} 

Next, by summing~\eqref{eq:transfer:proc:tech} over all graph 
sequences from~$\cG_{n,m}$ with~$Z(G_{m-1}) \le A n$ 
(ignoring those sequences that are not realizable by the 
simple random graph process~$(G^{\alpha}_{n,i})_{0 \le i \le m}$) 
it follows that 
\begin{equation}\label{eq:transfer:seq}
\Pr( (G^{\alpha}_{n,i})_{0 \le i \le m} \in \cG_{n,m} \text{ and } Z(G^{\alpha}_{n,m-1}) \le A n) \le O(1) \cdot \Pr( (G^{\alpha,*}_{n,i})_{0 \le i \le m} \in \cG_{n,m}) , 
\end{equation}
where here and below we use the convention that 
all implicit constants may depend on~$C,\alpha_0,A$. 

Finally, we compare~$\Pr(Z(G^{\alpha}_{n,m-1})>A n)$ with~$\Pr(Z(G^{\alpha,*}_{n,m-1}) > A n)$. 
Since~$Z(G_i) < Z(G_{i+1})$ by monotonicity of the degrees, 
here the idea is to~focus on the first step
where~$Z(G_j) \le An$ is violated, and then only compare 
the probabilities in both processes up to (and including) that step 
via Lemma~\ref{lem:transfer:proc}. 
Turning to the details, define~$\cH_{n,j}$ as the set of all 
simple graph sequences~$(G_0,\ldots, G_j)$ 
with $V(G_i)=[n]$, $G_{i} \subset G_{i+1}$, 
$e(G_i)=i$, $Z(G_{j-1}) \le A n$ and $Z(G_{j}) > A n$. 
Noting that initially~$Z(G^{\alpha}_{n,0})=0=Z(G^{\alpha,*}_{n,0})$,
by summing~\eqref{eq:transfer:proc:tech} over all 
graph sequences from~$\cH_{n,j}$ it follows~that 
\begin{equation}\label{eq:Qsum}
\begin{split}
\Pr(Z(G^{\alpha}_{n,m-1}) > A n) 
& = \sum_{1 \le j \le m-1} \Pr( (G^{\alpha}_{n,i})_{0 \le i \le j} \in \cH_{n,j}) \\
& = \Theta(1) \cdot \sum_{1 \le j \le m-1} \Pr( (G^{\alpha,*}_{n,i})_{0 \le i \le j} \in \cH_{n,j}) 
\le O(1) \cdot \Pr(Z(G^{\alpha,*}_{n,m-1}) > A n) ,
\end{split}
\end{equation}
which together with~\eqref{avesta}--\eqref{eq:transfer:seq} completes 
the proof of inequality~\eqref{eq:transfer:proc}.
\end{proof}
\begin{remark}\label{rem:transfer:difference}
The transfer statement of~\refT{thm:transfer} fails when~$\alpha=\alpha(n) \to 0$ sufficiently fast:
e.g.,~for~$\alpha = o(n^{-2})$ and $m=O(n)$ it is easy to check 
that~$(G^{\alpha}_{n,i})_{0 \le i \le m}$ \whp{} sequentially builds up a~clique, 
whereas~$(G^{\alpha,*}_{n,i})_{0 \le i \le m}$ \whp{} only adds~loops. 
\end{remark}

\section{Size of the giant component: proof of the main theorems}\label{sec:L1} 
In this section we prove our main giant component results
Theorems~\ref{thm:main1}--\ref{thm:main} and \ref{Tmulti}, 
by closely following the heuristics from \refS{sec:heuristic}. 
(Recalling~$\alpha =\alpha(n)\to a \in (0,\infty]$ as~$n \to \infty$, 
we sometimes need to distinguish the cases~$a <\infty$ and~$a=\infty$,
since they have different asymptotic degree~distributions.)

\begin{proof}[Proof of \refT{Tmulti}]
  Recall that  \refT{thm:main1} is a special case of \refT{thm:main}.
Thus it suffices to prove that \refT{thm:main} holds with $L_j(m):=L_j(\gganm)$. 
%
By considering subsequences in the standard way, we may assume 
that~$\eps=\eps(n)\to\epsoo$, for some~$\epsoo\in[0,\infty)$, as~\ntoo.
Let $Y_n\sim\NB(\ga,\pn)$ be as in \refT{thm:degree},
and let $\gd_n:=n\qqw\go(n)$ with $\go(n):=\log n$, say 
(any sequence with~$n\qqw\ll\gd_n\ll n\qqqw$ would work).
  Then \eqref{l3a}--\eqref{l3b}
  show that the random vector
$\gd_n\qw\bigpar{(\pi_k(\dn)-\P(Y_n=k))_{k=0}^\infty,
  (\mux_k(\dn)-\E Y_n^k)_{k=0}^\infty}$
converges in probability to 0
in the product space~$\bbR^\infty\times\bbR^\infty$.
By the Skorohod coupling theorem~\cite[Theorem~4.30]{Kallenberg},
we may without loss of generality assume that the random multigraphs for
different $n$ are coupled such that
this holds \as, and thus
that, \as, for every for every integer~$k\ge0$ we have
  \begin{align}\label{lxa}
    \pi_k(\dn)& = \P(Y_n=k)+o\bigpar{\gd_n},\\
\label{lxb}
      \mux_k(\dn)& = \E Y_n^k +o\bigpar{\gd_n}.
    \end{align}

As a preparatory step, we now show that~$\dn=\dnx$ satisfies the
assumptions of \refT{thm:giant:conf} with
\begin{align}\label{winston}
  D\sim
  \begin{cases}
    \NB(a,\poo), & \text{if~$a<\infty$},
    \\
    \Po(1+\epsoo), & \text{if~$a=\infty$},
  \end{cases}
\end{align}
where~$\poo$ is defined as in~\eqref{magnus} below. 
Since~$m=\mc(1+\eps)$, recalling~\eqref{pn} and~\eqref{def:mc} we infer that, as~$n \to \infty$, 
\begin{align}\label{magnus}
    p_n
  =\frac{2\mc(1+\eps)}{n\ga+2\mc(1+\eps)}
  =\frac{(1+\eps)/(\ga+1)}{1+(1+\eps)/(\ga+1)}
      =\frac{1+\eps}{\ga+2+\eps}
\; \to \; \frac{1+\epsoo}{a+2+\epsoo} =: \poo.
\end{align}
If $a<\infty$, then it readily follows from~\eqref{eq:NB} that~$Y_n\dto \NB(a,\poo)$.
If instead $a=\infty$, then~\eqref{magnus} yields~$p_n\to0$ and~$\ga\pn\to1+\epsoo$ as~$n \to \infty$, so that~\eqref{eq:NB} implies
\begin{align}
  \P(Y_n=k)=\frac{\prod_{0\le j<k}(\ga p_n+jp_n)}{k!} (1-p_n)^\ga
  \; \to \;
	\frac{(1+\epsoo)^k}{k!}e^{-(1+\epsoo)},
  \qquad k\ge0,
\end{align}
and thus~$Y_n\dto \Po(1+\epsoo)$.
To sum up, in both cases we have~$Y_n\dto D$, so that~\eqref{lxa} implies, \as,
$\pi_k(\dn) = \P(Y_n=k)+o(1) \to\Pr(D=k)$ as~$n \to \infty$, for every~$k \ge 0$. 
Furthermore, $\E D\in(0,\infty)$ and $\P(D=1)>0$ are obvious 
(note that~$\poo>0$ when~$a<\infty$).
Moreover, \eqref{lxb} and \refT{thm:degree} yield, \as, $\mu_5(\dn)=\E Y_n^5+o(1) = O(1)$,
so the assumptions of \refT{thm:giant:conf} hold (as claimed).

Next, gearing up to apply \refT{thm:giant:conf} in the two separate cases~$\epsoo=0$ and~$\epsoo>0$,  
we now estimate~$\E D$ and~$\E D(D-2)$. 
In particular, \eqref{winston}, \eqref{nb:mom:mom1} and~\eqref{magnus} yield,
when~$a<\infty$,
\begin{align}\label{ed}
  \E D = a\frac{\poo}{1-\poo}=\frac{a}{a+1}(1+\epsoo)
  =\frac{1}{1+a\qw}(1+\epsoo),
\end{align}
which obviously holds for~$a=\infty$ too (with our convention~$\infty\qw=0$).
Similarly, \eqref{nb:mom:mom1} yields, when $a<\infty$,
\begin{align}\label{edd2}
  \E D(D-2) =\E D(D-1) - \E D = (\E D)^2 (1+a\qw) - \E D  
   =\frac{1}{1+a\qw}(1+\epsoo)\epsoo ,
\end{align}
which again holds for~$a=\infty$ by standard Poisson formulas for~$D \sim \Po(1+\epsoo)$.

\pfcase{$\eps(n)\to\epsoo>0$.}\label{case>}
In this case, \eqref{edd2} yields $\E D(D-2) >0$. Hence,
\refT{thm:giant:conf}\ref{thm:supercr} applies and shows the existence of 
a unique~$\xi=\xi(a,\epsoo)\in(0,1)$ such that
\begin{align}
  \label{padda}
  \E D\xi^{D-1} = \xi\E D.
\end{align}
Furthermore, by \refT{thm:equivalence} and the calculations above,
\as the degree sequence~$\dn$ is such that~\eqref{eq:supercr}
applies to~$\gganm$
conditioned on~$\dn$. Consequently,
\eqref{eq:supercr} holds for~$\gganm$ also unconditionally,
which is~\eqref{eq:thm:main:giant:L1}--\eqref{eq:thm:main:giant:L2}
with~$\rho_a(\eps)$ replaced by
\begin{align}\label{paddington}
  \rho_a(\epsoo):=1-\E \xi^D.
\end{align}
In the remainder we take~\eqref{paddington} together 
with~\eqref{magnus}, \eqref{winston} and~\eqref{padda} 
as the definition of the function~$\rho_a(\epsoo)$, 
for all~$a\in(0,\infty]$ and~$\epsoo\in(0,\infty)$.
We now study further properties of this function,
and for convenience temporarily write~$\eps$
instead of~$\epsoo$. (We no longer consider finite~$n$ here, so
there is no risk of~confusion.)

If~$a=\infty$, then by combining~$D\sim\Po(1+\eps)$ with standard 
Poisson formulas, we can rewrite~\eqref{padda} as
\begin{align}\label{ele}
e^{-(1+\eps)(1-\xi)}=\xi .
\end{align}
In view of~\eqref{paddington} this also yields the identity 
\begin{align}\label{erika}
\rho_\infty(\eps)=1-e^{-(1+\eps)(1-\xi)}=1-\xi,
\end{align}
establishing that~$\rho_\infty(\xi)$ is the positive solution of \eqref{rhooo}, as asserted.

If~$a<\infty$, 
then by combining~$D \sim \NB(a,\poo)$ with~\eqref{nb:pgf:pgf'} and \eqref{magnus} 
we can rewrite~\eqref{padda} as
\begin{align}\label{sofie}
   \lrpar{\frac{a+1}{a+2+\eps-(1+\eps)\xi}}^{a+1}
  =\xi.
\end{align}
Then \eqref{paddington} and \eqref{nb:pgf:pgf'} yield
\begin{align}\label{emma}
\rho_a(\eps)=1-   \lrpar{\frac{a+1}{a+2+\eps-(1+\eps)\xi}}^{a}
  =1-\xi^{a/(a+1)}.
\end{align}
By \eqref{emma}, \eqref{sofie} and elementary algebra, 
it also follows that 
\begin{equation}\label{eq:NB:rho+}
  \rho_a(\eps)
  = 1-\left(1+ \frac{(1+\eps)(1-\xi)}{a+1}\right) \cdot \xi
  = (1-\xi) \cdot \left(1-\frac{(1+\eps) \xi}{a+1}\right) .
\end{equation}

We remark that, in view of~\eqref{erika}, 
equations~\eqref{emma}--\eqref{eq:NB:rho+} both also hold when~$a=\infty$ 
(by interpreting the fractions in the natural way, \ie, as limits).
As a further technical interlude (that can safely be skipped on a first reading), note that 
the difference between the two sides in \eqref{sofie} is a convex function
of $\xi\in[0,1]$, which vanishes at $\xi=\xi(a,\eps)$ and $\xi=1$ but not
identically. Hence its derivative at $\xi(a,\eps)$ must be negative, and
thus the implicit function theorem applies and shows that~$\xi(a,\eps)$ 
is an analytic function of $(a,\eps)\in(0,\infty)^2$.
Hence, using~\eqref{emma} or~\eqref{eq:NB:rho+}, 
it follows that~$\rho_a(\eps)$ is also an analytic function of~$(a,\eps)$.


\pfcase{$\eps(n)\to\epsoo=0$.} \label{case0}
In this case,~\eqref{edd2} yields~$\E D(D-2)=0$. 
Using~\eqref{ed} and~\eqref{nb:mom:mom1} when~$a< \infty$ and standard Poisson formulas for~$D \sim \Po(1)$ when~$a=\infty$, it follows that~$\E D = 1/(1+a\qw)$ and 
\begin{align}\label{eddd}
\frac{\E D}{\E D(D-1)(D-2)} = \frac{1}{(\E D)^2 (1+1/a)(1+2/a)}
= \frac{1+a\qw}{1+2/a} .
\end{align}
Moreover, using~\eqref{lxb} and~$\gd_n \ll n\qqqw\ll \eps$, 
 in analogy with~\eqref{ed}--\eqref{edd2} it follows that, \as, 
\begin{align}\label{jw}
  \muu_n/n= \mux_2(\dn)-2\mux_1(\dn)
	= \E Y_n(Y_n-2) + o(\gd_n)
  =   \frac{1}{1+\ga\qw}(1+\eps)\eps + o(\eps)
    \sim   \frac{1}{1+a\qw}\eps.
\end{align}
Consequently, $n^{-2/3}\muu_n = \Theta(\eps n\qqq) \to\infty$,
and \refT{thm:giant:conf}\ref{thm:critical} applies \as{}
to~$\gganm$ conditioned on $\dn$,
again using \refT{thm:equivalence}.
Hence, \eqref{eq:critical} yields,
conditioned on $\dn$ and therefore also unconditioned, 
\begin{align}\label{tho}
  L_1(\gganm)=\lrpar{\frac{2(1+a\qw)}{1+2/a}+\op(1)}\muu_n
    =\frac{2\eps n}{1+2/a} \cdot \bigpar{1+\op(1)}.
\end{align}
We similarly also obtain~$L_2(\gganm)=\op(\muu_n)=\op(\eps n)=\op(L_1(\gganm))$. 
This verifies \eqref{eq:thm:main:giant:L2}, in this case too.
However, to derive \eqref{eq:thm:main:giant:L1} from \eqref{tho}, it remains 
to stitch the two formulas together by showing that, as~$\eps\to0$ with~$a\le\infty$ fixed,
we have~$\rho_a(\eps)\sim 2\eps/(1+2/a)$, as asserted (more precisely) in
\eqref{eq:rho:linear}. 
We shall do this by an analytic argument. (An alternative, more conceptual,
proof is sketched in \refR{Rfit}.)
Since the claimed asymptotics is well-known in the Poisson case~$a=\infty$, 
for simplicity we henceforth assume~$a<\infty$ 
(the proof in the simpler case~$a=\infty$ proceeds in the same way).  
With foresight, we define 
\begin{align}
  \label{psi}
\psi:=(1+\eps)(1-\xi)>0  ,
\end{align}
and then (similarly to the calculations leading to~\eqref{eq:NB:rho+} above) rewrite~\eqref{sofie} as
\begin{equation}\label{eq:NB:xi}
\xi
= \left(1+ \frac{(1+\eps)(1-\xi)}{a+1}\right)^{-(a+1)}
= \left(1+ \frac{\psi}{a+1}\right)^{-(a+1)}.
 \end{equation}
 Using~\eqref{sofie} and~$\log(1+x)\le x$,
 it follows that 
\begin{align}\label{kk}
  (1-\xi) + (1-\xi)^2/2 < -\log\bigpar{1-(1-\xi)}
  = \log \frac{1}{\xi}
=(a+1)\log\left(1+ \frac{\psi}{a+1}\right)
  \le \psi=(1+\eps)(1-\xi), 
\end{align}
which after dividing by~$1-\xi>0$ easily gives $1-\xi < 2\eps$, and
thus~$\rho\le 1-\xi < 2\eps$ by~\eqref{emma} or~\eqref{eq:NB:rho+}.
Furthermore, equations~\eqref{psi}--\eqref{eq:NB:xi} and a Taylor expansion (or the general binomial series) yield
\begin{align}
  \frac{\psi}{1+\eps}
  =1-\xi
  = 1-\left(1+ \frac{\psi}{a+1}\right)^{-(a+1)}
  = \psi-\frac{a+2}{2(a+1)} \psi^2 + O(\psi^3).  
\end{align}
Since~$0 < \psi \le (1+\eps)2\eps = O(\eps)$,
after dividing by~$\psi>0$ 
it then routinely follows that  
\begin{align}\label{klm}
  \psi= \frac{2(a+1)\eps}{(a+2)(1+\eps)} + O(\eps^2) 
  = \frac{2(a+1)}{a+2}\eps + O(\eps^2) .
  \end{align}
Recalling \eqref{psi} and that~$1-\xi= O(\eps)$, 
now~\eqref{eq:NB:rho+} and \eqref{klm}
imply for~$\eps \searrow 0$ the asymptotics 
\begin{align}
  \rho_a(\eps) = \frac{\psi}{1+\eps} \cdot \left(\frac{a}{a+1}+O(\eps)\right) 
= \frac{2a\eps}{a+2} + O(\eps^2) ,
\end{align}
as stated in \eqref{eq:rho:linear}. 
This completes the proof of \refT{Tmulti}, as discussed below~\eqref{tho}.
\end{proof}

\begin{proof}[Proof of Theorems \ref{thm:main1} and \ref{thm:main}]
These results follow from the multigraph version \refT{Tmulti} and 
the transfer result \refT{thm:transfer} by routine arguments 
(the key point is that any event which fails with probability at most~$\pi=o(1)$ in~$\gganm$ 
fails with probability at most $B \cdot \pi+o(1)=o(1)$ in~$\ganm$).
\end{proof}

\begin{remark}\label{rem:NB:rho}
When~$\eps \to \infty$ (with $\ga$ fixed),
it follows easily from~\eqref{eq:NB:xi} that~$\xi \to 0$,
and thus \eqref{emma} implies~$\rho_a(\eps) = 1- \xi^{a/(a+1)}\to 1$. 
When~$a \to \infty$ with $\eps \in (0,\infty)$ fixed,
it follows easily from \eqref{eq:NB:xi} that $\xi(a,\eps)\to\xi(\infty,\eps)$
satisfying \eqref{ele}, and thus \eqref{eq:NB:rho+} and \eqref{erika} imply
$\rho_{a}(\eps) \to 1-\xi(\infty,\eps)=\rho_{\infty}(\eps)$.
Furthermore, since $\rho_\infty(\eps)$ is continuous with bounded range,
and each $\rho_a(\eps)$ is increasing, it follows 
that~$\rho_a(\eps)\to\rho_\infty(\eps)$ uniformly in~$\eps\in(0,\infty)$.
We omit the details.
\end{remark}

\begin{remark}\label{Rfit}
  An alternative proof of \eqref{eq:rho:linear} without calculations
  is based on the somewhat vague but very general idea
  that  when we have a limit theorem with different cases as above, then
  the cases have to fit together smoothly.
  To see this in the present case,
  fix $\ga$ and
write for simplicity $\G(n,\eps):=\gganx{\floor{\mc(1+\eps)}}$.
Consider a sequence $\eps_i\to0$.
First, fix $i$ and take $\eps=\eps_i$ constant.
Then, by Case \ref{case>} ($\epsoo>0$) 
in the proof above, $L_1(\G(n,\eps_i))=\rho_\ga(\eps_i) n
\bigpar{1+\op(1)}$. Hence we can choose $n_i$ so large that with probability
$>1-2^{-i}$, 
\begin{align}\label{mas}
  |L_1(\G(n_i,\eps_i))/n_i-\rho_\ga(\eps_i)|<2^{-i}\eps_i.
\end{align}
We may further assume $n_{i+1}>n_i$ and $n_i> i \eps_i^{-3}$.
Now consider the sequence of multigraphs $\G(n_i,\eps_i)$, $i\ge1$.
Case \ref{case0} ($\epsoo=0$) in the proof
above applies, so \eqref{tho} holds  for the multigraphs $\G(n_i,\eps_i)$,
which together with \eqref{mas} implies
\eqref{eq:rho:linear}, up to a weaker error term~$o(\eps)$.

It is also possible to recover the error term~$O(\eps^2)$ without explicit
calculations. Suppose for definiteness that~$a<\infty$. We may then solve
\eqref{sofie} for~$\eps$, and obtain~$\eps=\eps(\xi)$ as an analytic function
of~$\xi$ in some complex neighbourhood of~$\xi=1$. The derivative at~$1$ is
non-zero, \eg{}~as a consequence of~\eqref{eq:rho:linear} and~\eqref{emma},
and thus the implicit function theorem shows that $\xi(a,\eps)$ may be
extended to an analytic function of $\eps$ in some neighbourhood of~$0$.
By~\eqref{emma}, the same holds for~$\rho_\ga(\eps)$. Thus~$\rho_\ga(\eps)$
is analytic on the closed half-line~$[0,\infty)$, and all derivatives stay
bounded as~$\eps\to0$; so the error term~$O(\eps^2)$ 
in~\eqref{eq:rho:linear} follows by Taylor's~theorem. 
\end{remark}

\section{Final remarks}\label{sec:final}

\subsection{Preferential attachment with negative~$\ga$}\label{Snegative}
As in previous work, 
we have throughout assumed~$\ga>0$. 
It is also possible to consider~$\ga<0$, provided~$\ga=-r$ for some integer~$r=|\ga| \in \NN$.
In this case, the process adds edges by choosing vertices
as in \refS{sec:equivalence}
with probability proportional to~$r-d_v$, where~$d_v$ is the current degree.
This is equivalent to starting with~$r$~half-edges for each vertex, and then
choosing pairs of half-edges uniformly at random, in the simple graph
version conditioned on keeping the graph simple.
The multigraph version can be seen (at least when $rn$ is even)
as the stepwise construction of an $r$-regular multigraph by the configuration
model.
Equivalently, it is the process obtained if we construct an~$r$-regular
multigraph by the configuration model and then take its edges in uniformly
random order; hence it can be seen as edge percolation on this random multigraph. 

This process was introduced by \citet{SW1999} as a method to generate almost
uniformly distributed regular graphs,
see also \cite{KV2004,KV2006}. 
Note that no vertex will ever get degree more than $r$, so the process stops
when we cannot add any edge without increasing the maximum degree above
$r$. Then there are $n-O(1)$ vertices of degree $r$, in both the simple
graph and multigraph version, 
so the final (multi)graph is almost regular.
Thus, the process stops after~$m=\mstop-O(1)$ steps, where
\begin{align}
  \label{def:mstop}
  \mstop:=\frac{rn}{2} = \frac{|\ga|}2n.
\end{align}

The papers~\cite{SW1999,KV2004,KV2006} just mentioned are mainly interested
in the resulting final graph, but we may also consider the entire process, and, in
particular, the emergence of the giant component in this process. 
In the following discussion we shall always assume that~$r:=|\ga|\ge 3$, 
so that~$n/2 < \mc < n < \mstop$ by~\eqref{def:mc} and~\eqref{def:mstop}.  
Fix a small~$\eta>0$ with~$(1-\eta)\mstop>\mc$.
Assuming~$m\le(1-\eta)\mstop$, it then is fairly easy to verify that the proofs 
from Sections~\ref{sec:approx:proofs}--\ref{sec:L1} carry over to that case 
(with minor routine changes).
The only noteworthy difference is that in \refS{sec:degree} the processes~$D_v(t)$
are now  death processes, starting with $r$~particles that
each dies at rate~$1$, and it turns out that~\eqref{eq:Dvt} is then replaced by
\begin{align}
  D_v(t)\sim\Bin\bigpar{|\ga|,1-e^{-t}}.
\end{align}
As a consequence, the asymptotic degree distribution is now binomial, and it follows that
\refT{thm:degree} holds with $Y_n\sim\Bin(|\ga|,p_n)$ and~$p_n:=2m/(n|\ga|)$.
In spite of these differences, it turns out that the probability generating function of~$Y_n$ 
can in the case~$\alpha<0$
be written in the same form
\begin{align}
  \E x^{Y_n} = \Bigpar{1+\frac{2m}{n\ga}(1-x)}^{-\ga} .
\end{align}
as in the case~$\ga>0$ (see equations~\eqref{nb:pgf:pgf'} and~\eqref{pn},
where~$(1-p_nx)/(1-p_n)=1+p_n(1-x)/(1-p_n)$ and $p_n/(1-p_n)=2m/(n\ga)$ hold).  
As a consequence, the equations~\eqref{sofie}--\eqref{eq:NB:rho+} that 
define~$\rho_\ga(\eps)$ are still valid.
Hence, assuming~$r\ge3$, Theorems~\ref{thm:main1}, \ref{thm:main} and \ref{Tmulti} hold for~$\ga:=-r$ too
(the assumption~$m\le(1-\eta)\mstop$ may be eliminated by
the same argument as in \refR{Rnobound}).
We may also let~$\ga\to-\infty$, with~$\rho_{-\infty}=\rho_\infty$; 
we leave the routine details to the reader.  
(In the multigraph case,
these results 
are special cases of earlier results on edge
percolation in random multigraphs with given vertex degrees,
see~\cite{Fountoulakis,SJ215}.)

\subsection{Extensions and variants}
\subsubsection{Hypergraphs and different vertex weights} 
It seems possible to adapt the methods of this paper to study 
hypergraph variants of the preferential attachment random graph process.
Similarly, it also seems possible to analyze the natural variant where a 
new edge~$\{v,w\}$ is added with probability proportional to~$(d_v+\alpha_v)(d_w+\alpha_w)$, 
i.e., where each vertex has its own `preference weight' 
(though here some assumptions on the~$(\alpha_v)_{v \in [n]}$ seem necessary  
in order to prove a multigraph transfer statement akin to \refT{thm:transfer}). 
We leave these extensions to the reader.

\subsubsection{More general preferential attachment functions}\label{sec:final:f} 
As suggested by Lov\'asz, see~\cite{Pittel}, and partly already by Erd{\H{o}}s~and~R{\'e}nyi~\cite{ER1961}, 
it is natural to study processes with more general preferential 
attachment functions~$f=f_n: \NN \to [0,\infty)$. 
We write~$(G^{f}_{n,m})_{m \ge 0}$ for the variant of~$(G^{\alpha}_{n,m})_{m \ge 0}$ where
the next edge connects two currently non-adjacent~$v$ and~$w$ with probability proportional to~$f(d_v) f(d_w)$. 
Similarly, we write~$(G^{f,*}_{n,m})_{m \ge 0}$ for the multigraph variant 
where the next edge connects distinct~$v$ and~$w$ with probability proportional to~$2f(d_v) f(d_w)$, 
and forms a loop at~$v$ with probability proportional to~$f(d_v) f(d_v+1)$. 
This general class of dynamic network models not only contains the
preferential attachment process (via~$f(k):=k+\alpha$)
but also the classical Erd{\H{o}}s--R{\'e}nyi process (via~$f(k):=1$), 
the configuration model for $d$-regular graphs
and the Steger--Wormald process \cite{SW1999} discussed in \refS{Snegative}
(via~$f(k):=\max\{d-k,0\}$),
and the random $d$-process~\cite{RWd,WWd} (via~$f(k) := \indic{k < d}$). 

Here the \emph{problem of determining the asymptotic degree distribution} after~$m$ steps is feasible
for many functions~$f$ via the pure birth-process based construction of~$(G^{f,*}_{n,m})_{m \ge 0}$ from \refS{sec:degree}, 
with birth-rates~$\lambda_k:=f(k)$. 
For example, using this construction they are easily seen to be
asymptotically Poisson in the Erd{\H{o}}s--R{\'e}nyi process, 
and truncated Poisson in the random~$d$-process.  
By contrast, the \emph{giant component problem} for general functions~$f$ appears to be more challenging.  
The crux is that the conditional equivalence argument from \refS{sec:equivalence} 
(which crucially allowed us to work with the configuration model) 
seemingly only carries over to linear functions,
which motivates the following conceptually interesting~problem. 
\begin{problem}%
Study the giant component problem for~$G^{f}_{n,m}$ or~$G^{f,*}_{n,m}$ when~$f$ is non-linear. 
\end{problem}

\subsubsection{Edge-rewiring variant without edge-growth}\label{sec:final:rewiring}
There is a natural variant of the preferential attachment random multigraph process 
where the number of edges~$m=\Theta(n)$ is also fixed; 
this was proposed in the complex networks literature by Dorogovtsev, Mendes and Samukhin around~2002 
(see~\cite[Section~3]{DMS2003} and~\cite[Chapter~4.3]{DM2003}). 
To be more precise, this preferential attachment edge-rewiring process 
starts with a given (arbitrary) initial multigraph~$G_{0}$ with vertex set~$[n]$ and~$m$~multiedges, 
and then proceeds stepwise as follows: 
a uniform endvertex~$v$ of a uniformly chosen edge~$e$ is selected, and then~$e$ is replaced with the edge~$\{v,w\}$, 
where~$w$ is chosen with probability proportional to~$d_w+\alpha$. 
This rewiring process (also called simple edge-selection process~\cite{HP2010} or edge reconnecting model~\cite{RS2012}) 
converges rapidly to a unique stationary distribution~$G^{\alpha,\infty}_{n,m}$ 
(see~\cite[Sections~1.1--1.2]{HP2010}, \cite[Section~2.2]{RS2012}, and the `equilibrium' discussion in~\cite{DMS2003,DM2003}), 
which in fact 
has the same distribution as~$G^{\alpha,*}_{n,m}$ (see~\cite[Lemma~2.1]{RS2012}).
It follows that we can use the random multigraph~$G^{\alpha,*}_{n,m}$ to derive the long-run asymptotic properties 
of the edge-rewiring process with~$n$ vertices and~$m$ edges. 
In particular,
the result by \citet{Pittel} (or our \refT{thm:main1} and~\ref{Tmulti}) confirms
Conjecture~4.3 of Hruz and Peter~\cite{HP2010} regarding the existence of a
giant~component.

\subsection{Other properties}\label{sec:other}
The method used in this paper, where the auxiliary results from
\refS{sec:overview} are combined with results for~$\Gxdn$,  
can also be used to study other properties of of~$\ganm$ and~$\gganm$.
We briefly mention a few selected examples of interest here (leaving details to the reader).

First, we consider the \emph{number of vertices in `small' components of size~$k=O(1)$}. 
Under the assumptions of \refT{thm:giant:conf}, in~$\Gxdn$ their typical number can easily be related via~\cite[Lemma~4.1]{SJ241} to a branching process~$\bp$
that depends on (a size-biased version of) the `idealized degree distribution'~$D$ from \refT{thm:degree}. 
A routine analysis of~$\Pr(|\bp|=k)$ then leads to a conceptually simple proof of the counting formula~\cite[Lemma~4(b)]{Pittel}.\footnote{Pittel's result~\cite[Lemma~4(b)]{Pittel} is for the expected number of tree-components, and allows for growing~$k$.} 

Secondly, the so-called \emph{susceptibility}, which is 
the expected component-size of a randomly chosen vertex,
can be similarly estimated via~$\E |\bp|$ in the `subcritical' case, see~\cite[Corollary~3.2]{SJ241} and \refApp{sec:mc:heuristic}.

Thirdly, the widely studied \emph{$k$-core} is the largest induced subgraph
with minimum vertex degree at least~$k$ (which can be empty).  
By mimicking the first part of \refT{Tmulti}, 
one can use~\cite[Theorem~2.3]{JL2007} with~$D$ from \refT{thm:degree}
to show that, for each~$k \ge 2$, a linear-sized $k$-core whp appears in~$\ganm$ and~$\gganm$ around~$c_k n$ steps, 
where~$c_k := \tfrac{1}{2}\inf_{\mu>0} \mu/\Pr(Z_{\alpha}(\mu) \ge k-1)$ with negative binomial random variable~$Z_{\alpha}(\mu) \sim \NB\bigpar{\alpha+1,\mu/(\alpha+\mu)}$. 

Finally, we emphasize that our methods are geared towards the spare case~$m=O(n)$. 
Indeed, for denser graphs the reduction of the simple graph process to the multigraph variant
breaks down, at least in the present form (see \refT{thm:transfer}). 
This limitation is not just a mere proof artifact: in the multigraph process the threshold for connectivity is located around~$n^{1+\alpha^{-1}}$ edges~\cite[Theorem~2]{Pittel}, 
so for~$\alpha<1$ it must differ from the connectivity threshold of the simple graph process (whose location remains an open problem).

\subsection{Further open problems}
We close with some open problems for 
the preferential attachment process~$(G^{\alpha}_{n,m})_{m \ge 0}$ 
phase transition, which are all inspired by the corresponding 
behaviour of the Erd{\H{o}}s--R{\'e}nyi reference model: 
\begin{problem}
In the \emph{subcritical phase}~$m=\mc(1-\eps)$ with~$\eps=o(1)$ and $\eps^3n \to \infty$, 
	show that whp~$L_1(\mc(1-\eps)) \sim C_{\alpha}\eps^{-2}\log(\eps^3n)$ 
	for some constant~$C_{\alpha}>0$ (sharpening~\cite{Pittel}), 
	and prove a variant of \refCr{cor:main}. 
\end{problem}
\begin{problem}
In the \emph{critical window}~$m=\mc(1+\eps)$ with~$|\eps|=O(n^{-1/3})$, 
	writing $L_j=L_j(G^\alpha_{n,m})$ for the size of the $j$th largest component of~$G^\alpha_{n,m}$, 
	show that the sequence~$(L_j/n^{2/3})_{j\ge1}$ converges to a limiting distribution.
(\cite{Pittel} establishes that~$L_j$ is of order~$n^{2/3}$, for any fixed~$j \ge 1$.) 
	For the random multigraph~$\gganm$ this can be shown by combining our conditioning approach with the configuration model results from~\cite{DharaEtAl}, 
	but it remains open for~$\ganm$ (as \refT{thm:transfer} does not permit transfer of convergence in~distribution).
	Also, does the resulting limiting distribution equal the corresponding \ER{} one when~$\alpha = \omega(n^{2/3})$? 
\end{problem}
\begin{problem}
In the \emph{supercritical phase}~$m=\mc(1+\eps)$ with~$\eps=o(1)$ and $\eps^3n \to \infty$, show that~\eqref{eq:thm:main:giant:L2} can be improved to whp~$L_2(\mc(1+\eps)) \le D_{\alpha} \eps^{-2}\log(\eps^3n)$ for some constant~$D_{\alpha}>0$. 
\end{problem}

\bigskip{\noindent\bf Acknowledgements.}  
Part of this work was carried out during the authors' visit to the 
Isaac Newton Institute for Mathematical Sciences during the programme 
Theoretical Foundations for Statistical Network Analysis 
(EPSCR Grant Number EP/K032208/1).

\small
\bibliographystyle{plain}

\normalsize

\begin{appendix}

\section{Appendix}

\subsection{Heuristic for the phase transition location~$\mc$}\label{sec:mc:heuristic}
In this appendix we give an informal explanation for why~$\mc=\frac{n}{2(1+\alpha^{-1})}$ 
should be the phase transition location, 
based on the widely-used heuristic~\cite{SW,KRBN2010,RWapsubcr,JW2016,WWd} 
which predicts~$\mc$ as the blow-up point of the \emph{susceptibility}~$S(m)=S(G^{\alpha}_{n,m}) := \sum_j |C_j|^2/n$,
where~$C_1, C_2, \ldots$ denote the components of~$G^{\alpha}_{n,m}$.  
For simplicity, we henceforth assume that there is a deterministic~approximation 
\begin{equation}\label{def:S}
S(m) \approx s(t) \quad \text{with} \quad t=t(m) := m/n.
\end{equation}
By equation~\eqref{eq:tree:rate} in~\refS{sec:comments} any two distinct tree components~$C_j,C_k$ 
merge with probability proportional to~$[(2+\alpha)|C_j|-2] \cdot [(2+\alpha)|C_k|-2]$, 
in which case the susceptibility changes by $(|C_j|+|C_k|)^2/n-(|C_j|^2+|C_k|^2)/n=2|C_j||C_k|/n$. 
Pretending that nearly all components are not-too-big tree-components 
(which in analogy with the Erd{\H{o}}s--R{\'e}nyi process seems reasonable, 
since here we are only interested in steps leading up to the critical point, 
i.e., before the giant emerges), 
with some hand-waving we loosely expect~that 
\begin{equation}\label{eq:S:changes}
\begin{split}
\E\bigpar{S(m+1)-S(m)\;\big|\;S(m)} 
& \approx \sum_{j \neq k}\lrpar{\frac{[(2+\alpha)|C_j|-2] \cdot [(2+\alpha)|C_k|-2]}{\sum_{\text{$v \neq w$:~non-adj.}} (d_v+\alpha)(d_w+\alpha)} \cdot \frac{2|C_j||C_k|}{n}}\\
& \approx \frac{2}{n} \cdot \frac{\Bigpar{(2+\alpha)\sum_j |C_j|^2-2 \sum_j |C_j|}^2}{\Bigpar{\sum_{v}(d_v+\alpha)}^2} 
= \frac{2}{n} \biggpar{\frac{(2+\alpha)S(m)-2}{2i/n+\alpha}}^2 .
\end{split}
\end{equation}
Inserting~\eqref{def:S}, this suggests (together with~$S(m+1)-S(m)\approx s'(t)/n$ and~$S(0)=1$) 
the differential equation
\begin{align}\label{eq:s:DE}
  s'(t) = 2\biggpar{\frac{(2+\alpha)s(t)-2}{2t+\alpha}}^2 \quad \text{ and } \quad s(0)=1 .
\end{align}
The solution~$s(t)=\frac{\alpha-2t}{\alpha-2(\alpha+1)t}$ blows up at time~$\tc := \frac{1}{2(1+\alpha^{-1})}$, 
so the described heuristic indeed predicts the critical point~$\mc=\tc n$. 
(For comparison, in the Erd{\H{o}}s--R{\'e}nyi process the critical point is~$n/2$  
and its susceptibility blows up at time~$1/2$, since we analogously arrive at~$s'(t)=2s(t)^2$ and~$s(t)=1/(1-2t)$.) 

One of the arguments in~\cite{BNK} is essentially equivalent to this heuristic
(using a different time scale, see~\refS{sec:BNK:pw}). 
The assumed approximation~\eqref{def:S} can be justified rigorously, 
e.g., using either 
the differential equation method~\cite{Wormald1995DEM,Wormald1999DEM,W2019},
or the configuration model transfer method from \refS{sec:other}.

\subsection{Compatibility with previous work}\label{sec:previous} 
In this appendix we show that our giant component results are compatible with previous work~\cite{Pittel,BNK}.

\subsubsection{Work of Pittel (On a random graph evolving by degrees)}\label{sec:Pittel:pw}
Translating to our notation, Pittel~\cite{Pittel} considers the preferential attachment random graph process 
for \emph{constant}~$\alpha  \in (0,\infty)$.  
For~$c > c_\ga := 1/(1+\alpha^{-1})$ he defines~$c^*$ as the (unique) root~$x
\in (0,c_\ga)$ of
\begin{equation}\label{def:pittel:x}
\frac{x\ga^{\ga+1}}{(\ga+x)^{\ga+2}}=\frac{c \ga^{\ga+1}}{(\ga+c)^{\ga+2}} .
\end{equation}
Assuming~$n^{1/4}(c-c_\ga) \to \infty$, then~\cite[Theorem~1]{Pittel} ensures that the largest component has size 
\begin{equation}\label{eq:L1:Pittel}
L_1(cn/2) = \left[1-\left(\frac{\ga+c^*}{\ga+c}\right)^\ga\right] n \cdot (1+o_p(1)).
\end{equation}
Using the following auxiliary claim, 
Theorem~\ref{thm:main1} implies~\eqref{eq:L1:Pittel} under the weaker assumption~$n^{1/3}(c-c_\ga) \to \infty$ 
conjectured in~\cite{Pittel} 
(since for~$m := \mc (1+\eps)$ we have~$c=(1+\eps)c_\ga$ and~$n^{1/3}\eps = \Theta(n^{1/3}(c-c_\ga)) \to \infty$). 
\begin{claim}
Let~$\eps>0$. Then for~$c:=(1+\eps)c_\ga$ we have~$\rho_{\ga}(\eps)=1-\left(\frac{\ga+c^*}{\ga+c}\right)^\ga$. 
\end{claim}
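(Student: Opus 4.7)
The plan is to rewrite Pittel's defining equation \eqref{def:pittel:x} for $c^*$ as an equation on the variable $y:=\xi^{1/(\ga+1)}$, where $\xi=\xi(\ga,\eps)$ is the unique solution in $(0,1)$ of \eqref{sofie}, and then match it against \eqref{sofie}.

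First I would introduce the substitution $y:=\xi^{1/(\ga+1)}\in(0,1)$ and $u:=y(\ga+c)-\ga$, where $c=(1+\eps)c_\ga=(1+\eps)\ga/(\ga+1)$. Noting the convenient identity $(\ga+1)(\ga+c)=\ga(\ga+2+\eps)$, rewriting \eqref{sofie} as $y(\ga+2+\eps-(1+\eps)y^{\ga+1})=\ga+1$ and eliminating $\ga+2+\eps$ in favour of $\ga+c$ yields, after multiplication by $(\ga+c)^{\ga+1}/\ga$, the equation
\begin{equation*}
(\ga+1)(u+\ga)(\ga+c)^{\ga+1}/\ga-(1+\eps)(u+\ga)^{\ga+2}=(\ga+1)(\ga+c)^{\ga+2},
\end{equation*}
which, using $(\ga+1)/[\ga(1+\eps)]=1/c$, rearranges to the clean form
\begin{equation*}
\frac{u\ga^{\ga+1}}{(\ga+u)^{\ga+2}}=\frac{c\ga^{\ga+1}}{(\ga+c)^{\ga+2}}.
\end{equation*}
This is exactly Pittel's equation \eqref{def:pittel:x}.

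Next I would verify that $u$ lies in the correct interval $(0,c_\ga)$ so that uniqueness of $c^*$ gives $u=c^*$. Since $\xi<1$, we have $y<1$, so $u<\ga+c-\ga=c$. A quick analysis of $f(x):=x\ga^{\ga+1}/(\ga+x)^{\ga+2}$ shows $f'(x)=\ga^{\ga+1}(\ga-(\ga+1)x)/(\ga+x)^{\ga+3}$, so $f$ is strictly increasing on $(0,c_\ga)$ and strictly decreasing on $(c_\ga,\infty)$; thus $f(u)=f(c)$ with $u<c$ and $c>c_\ga$ forces $u\le c_\ga$, with equality only if $c=c_\ga$. Positivity $u>0$ amounts to $y>\ga/(\ga+c)=(\ga+1)/(\ga+2+\eps)$, which follows by plugging $y=(\ga+1)/(\ga+2+\eps)$ into \eqref{sofie} and observing that the resulting value is $<\xi$ (or by a monotonicity/boundary check: $u\to c_\ga$ as $\eps\searrow0$ and $u\searrow0$ as $\eps\to\infty$, together with continuity).

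Finally, by \eqref{emma}, $\rho_\ga(\eps)=1-\xi^{\ga/(\ga+1)}=1-y^{\ga}=1-((\ga+c^*)/(\ga+c))^{\ga}$, which is the claimed identity. The main obstacle is the algebraic bookkeeping in the first step (making sure the substitution $u=y(\ga+c)-\ga$ is the right one); once this is in place, the rest is routine. Note that this also gives a conceptual reason for Pittel's restriction $n^{1/4}(c-c_\ga)\to\infty$ corresponding to $\eps^4 n\to\infty$, and how \refT{thm:main1} relaxes it to $\eps^3 n\to\infty$.
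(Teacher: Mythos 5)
Your proposal is correct and is essentially the paper's argument run in the reverse direction: the paper starts from $c^*$, defines $\xi:=\bigl(\tfrac{\ga+c^*}{\ga+c}\bigr)^{\ga+1}\in(0,1)$ and checks it satisfies \eqref{sofie}, invoking uniqueness of $\xi$; you start from $\xi$, set $u:=\xi^{1/(\ga+1)}(\ga+c)-\ga$ and check it satisfies \eqref{def:pittel:x}, invoking uniqueness of $c^*$. The price of your direction is the extra verification $u\in(0,c_\ga)$, which the paper avoids since $\xi\in(0,1)$ is immediate; your monotonicity analysis of $f(x)=x\ga^{\ga+1}/(\ga+x)^{\ga+2}$ handles this correctly (and positivity of $u$ is in fact immediate from $f(u)=f(c)>0$ with $\ga+u>0$, so the more involved boundary check you sketch is unnecessary). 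One typo: after clearing denominators your displayed equation should carry $(\ga+c)^{\ga+2}$ in the first term, not $(\ga+c)^{\ga+1}$; the subsequent rearrangement to $u/(\ga+u)^{\ga+2}=c/(\ga+c)^{\ga+2}$ via $(\ga+1)/[\ga(1+\eps)]=1/c$ is nonetheless correct.
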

\begin{proof}
In view of~\eqref{def:pittel:x} let 
\begin{equation}\label{eq:xi:def}
\xi := \frac{c^*(\ga+c)}{c(\ga+c^*)} = \left(\frac{\ga+c^*}{\ga+c}\right)^{\ga+1} ,
\end{equation}
and observe that $\xi \in (0,1)$ holds (since~$\ga \in (0,\infty)$
and~$c^*<c_\ga< c$).
Noting that 
\begin{align}
  1-\xi = \frac{c(\ga+c^*)-c^*(\ga+c)}{c(\ga+c^*)} =
\frac{\ga(c-c^*)}{c(\ga+c^*)} , 
\end{align}
we infer
\begin{align}
    \frac{\ga+c}{\ga+c^*} = 1 + \frac{c-c^*}{\ga+c^*}
  = 1 + \frac{c}{\ga} \cdot (1-\xi)
  =1+ \frac{1+\eps}{\ga+1}(1-\xi). .
\end{align}
Inserting this back into~\eqref{eq:xi:def}, a comparison with the first equation in~\eqref{eq:NB:xi} 
now shows that~$\xi$ is indeed the unique solution in $(0,1)$
to~$\E D\xi^{D-1} =\xi \E D$,  
and thus, by~\eqref{emma}
and \eqref{eq:xi:def}, we obtain~$\rho_{\ga}(\eps) =
1-\xi^{\ga/(\ga+1)} =
1-\left(\frac{\ga+c^*}{\ga+c}\right)^\ga$.
\end{proof}

\subsubsection{Work of Ben-Naim and Krapivsky (Popularity-driven networking)}\label{sec:BNK:pw}
Ben-Naim and Krapivsky~\cite{BNK} investigate 
the special case $\alpha=1$ of the preferential attachment random multigraph process 
(using a different time-parametrization) via the kinetic theory methodology from statistical physics, 
also called rate equation approach. 
Inspecting equation~(7) in~\cite{BNK} (solving~$2m/n=\langle j \rangle =t/(1-t)$ for~$t$), 
for fixed~$\eps>0$ it follows that adding~$m=(1+\eps)n/4$ edges corresponds to their time
\begin{equation}\label{eq:time:BNK}
t:=(1+n/2m)^{-1} = 1/3 + 2\eps/9 + O(\eps^2).
\end{equation}
The discussion of the function~$g(t)$ in~\cite[p.~4, together with~(1)]{BNK}   
then says that 
the asymptotic fraction of vertices in the giant component should be approximately equal to
\begin{align}
  3(t-1/3) \sim 2\eps/3 \sim \rho_1(\eps)    \qquad\text{as }\eps \searrow 0,
\end{align}
which is made rigorous by
Theorems~\ref{thm:main1} and~\ref{Tmulti}.
%
Finally, noting that~\eqref{eq:time:BNK} and~\eqref{pn} imply
the identity~$p_n=t$ (when~$\alpha=1$), \refT{thm:degree} also 
gives a rigorous version of the asymptotic degree distribution~(9) 
in~\cite{BNK}.

\end{appendix}

\end{document}